\newcommand{\Et}{\widetilde E}
\newcommand{\Omt}{\widetilde\Omega}
\newcommand{\nnn}{\nonumber}
\newcommand{\beqn}{\begin{equation}}
\newcommand{\eeqn}{\end{equation}}
\newcommand{\Stwo}{\mathbb{S}^2}
\newcommand{\nb}{\bar n}
\newcommand{\rt}{t}
\newcommand{\Qt}{\widetilde Q_\xi}
\newcommand{\Qb}{\overline{Q}_\xi}
\newcommand{\Om}{\Omega}
\newcommand{\abs}[1]{{\left\vert #1\right\vert}}
\newcommand{\e}{\varepsilon}
\newcommand{\R}{{\mathbb R}}
\DeclareMathOperator{\tr}{tr}
\newtheorem{theorem}{Theorem}[section]
\newtheorem{proposition}[theorem]{Proposition}
\newtheorem{lemma}[theorem]{Lemma}
\theoremstyle{definition}
\newtheorem{remark}[theorem]{Remark}
\title{Spherical particle in a nematic liquid crystal under an external field: the Saturn ring regime}
\date{}
\author{Stan Alama
\footnote{McMaster University, Hamilton, ON, Canada}
 \and Lia Bronsard
\footnotemark[\value{footnote}] 
  \and Xavier Lamy
\footnote{Institut de Math\'ematiques de Toulouse; UMR5219 - Universit\'e de Toulouse; CNRS - UPS IMT, F-31062 Toulouse Cedex 9, France}
  }
\begin{document}
\maketitle

\abstract{ We consider a nematic liquid crystal occupying the exterior region in $\R^3$ outside of a spherical particle, with radial strong anchoring.  Within the context of the Landau-de Gennes theory, we study minimizers subject to a strong external field, modelled by an additional term which favors nematic alignment parallel to the field.  When the external field is high enough we obtain a scaling law for the energy.  The energy scale corresponds to minimizers concentrating their energy in a boundary layer around the particle, with quadrupolar symmetry.  This suggests the presence of a Saturn ring defect around the particle, rather than a dipolar director field typical of a point defect.}

\section{Introduction}

In this paper we continue the study started in \cite{colloid} of a spherical colloid particle immersed in nematic liquid crystal (see also \cite{colloidphys,weakanchor}).  Motivated by the experiments described in \cite{guabbott00,loudetpoulin01} and the heuristic and numerical arguments exposed in \cite{stark02,fukudaetal04,fukudayokoyama06}, we are interested in the effect of an external magnetic or electric field on the type of defects that can be observed.

Nematic liquid crystals are typically made of elongated molecules which tend to align in a common direction. Several continuum models have been proposed to describe this alignment, including the Oseen-Frank and the Landau-de Gennes models. In the Oseen-Frank description the alignment is represented by a unit director $n\in\mathbb S^2$, and the Landau-de Gennes theory employs the so-called $Q$-tensors: traceless symmetric $3\times 3$ matrices, accounting for the alignment of the molecules through their eigenvectors and eigenvalues. With respect to directors $n\in\mathbb S^2$, the $Q$-tensors can be thought of as relaxing the uniaxial constraint
\begin{equation*}
Q=n\otimes n-\frac 13 I.
\end{equation*}
 The Landau-de Gennes energy enforces this uniaxial constraint as a small coherence length goes to zero, the limit in which one can recover the Oseen-Frank model. This convergence has recently produced a rich trove of mathematical analysis \cite{majumdarzarnescu10,canevari2d,
canevari3d,baumanparkphillips12,
golovatymontero14,singperturb}. An important feature in experiment and in the analysis is the occurrence of defects (singular structures). Compared to the director description, the additional degrees of freedom offered by $Q$-tensors allow for a much finer description of the defect cores where biaxiality might occur, and the nonlinear analysis of defect cores has recently attracted much attention \cite{INSZinstab2d,INSZstab2d,
difrattaetal16,INSZuniqhedgehog,
INSZstabhedgehog,canevari2d,biaxialescape}.

When foreign particles are immersed into nematic liquid crystal, the modifications they induce in the nematic alignment may generate additional defects, leading to many potential applications related e.g. to the detection of these foreign particles or to structure formation created by defect interactions \cite{stark01}. The mathematical analysis of such phenomena is very challenging. Here we are interested in the most fundamental situation: a single spherical particle in a sea of liquid crystal. 

We assume radial anchoring at the particle surface : the liquid crystal molecules tend to align perpendicularly to the surface. This creates a topological charge that has to be balanced by a defect so as to be compatible with a uniformly (say vertically) aligned state far away from the particle. In the absence of external field, two different types of configurations have been predicted and observed: the so-called \emph{hedgehog} and \emph{Saturn ring}. The hedgehog configuration presents one point defect above (or below) the particle. The Saturn ring configuration presents a line defect around the particle. Both configurations are axially symmetric with respect to the vertical axis, and the Saturn ring configuration enjoys the additional mirror symmetry with respect to the equatorial plane. Hedgehog configurations have been observed for large particles, and Saturn rings for smaller particles. In our previous work \cite{colloid} we provided a rigorous mathematical justification of these observations based on the Landau-de Gennes model, together with a very precise description of the Saturn ring. 

In the presence of an external field, the situation changes, as a Saturn ring defects can be observed even around large particles \cite{guabbott00}. A heuristical explanation proposed in \cite{stark02} is that the external field confines defects to a much narrower region around the particle, which is favorable to the Saturn ring type of defect. This explanation has been confirmed numerically in \cite{fukudaetal04,fukudayokoyama06} using a Landau-de Gennes model and assuming the external field to be uniform in the sample. There the presence of the external field is simply modelled by adding a symmetry-breaking term to the energy (favoring alignment along the field), multiplied by a parameter accounting for the intensity of the field. In the present paper we study this simplified model and prove that, when the applied field is high enough, minimizers should indeed correspond to Saturn ring configurations.

After adequate nondimensionalization \cite{fukudaetal04} we are left with two parameters $\xi,\eta>0$ which represent, in units of the particle radius, the coherence lengths for nematic alignment and alignment along the external field. In these units the colloid particle is represented by the closed ball of radius one $B=\lbrace \abs{\cdot}\leq 1\rbrace\subset\R^3$, so that the liquid crystal is contained in the domain  $\Omega=\R^3\setminus B$. The Landau-de Gennes energy used in \cite{fukudaetal04,fukudayokoyama06} is given by
\begin{align*}
E(Q)&=\int_{\Omega} \left( \frac 12\abs{\nabla Q}^2 +\frac{1}{\xi^2}\left[f(Q) + h^2 g(Q) \right]\right)\\
&=\int_{\Omega} \left( \frac 12\abs{\nabla Q}^2 +\frac{1}{\xi^2}f(Q) + \frac{1}{\eta^2} g(Q) \right),\quad\eta=\frac\xi h.
\end{align*}
The map $Q$ takes values into the space $\mathcal S_0$ of $3\times 3$ symmetric matrices with zero traces and describes nematic alignment.
 The nematic potential $f(Q)=-\frac 12\abs{Q}^2-\tr(Q^3)+\frac 34 \abs{Q}^4+cste$ satisfies
\begin{equation*}
f(Q)\geq 0\text{ with equality iff }Q=n\otimes n -\frac 13 I\text{ for some }n\in\mathbb S^2.
\end{equation*}
The symmetry-breaking potential $g(Q)$ is given by
\begin{equation*}
g(Q)=\sqrt{\frac 23} - \frac{Q_{33}}{\abs{Q}}.
\end{equation*}
It breaks symmetry in the sense that the rotations $R\in SO(3)$ which satisfy $g({}^t R Q R)=g(Q)$ for all $Q\in\mathcal S_0$ must have $e_3$ as an eigenvector, while $f({}^tR Q R)=f(Q)$ for all $R\in SO(3)$ and $Q\in \mathcal S_0$. Its specific form is chosen so that
\begin{equation*}
g(Q)=c(1-n_3^2)\quad\text{for }Q=n\otimes n-\frac 13 I,
\end{equation*}
and $g(Q)$ is invariant under multiplication of $Q$ by a positive constant \cite{fukudaetal04}.
This potential satisfies
\begin{equation*}
g(Q)\geq 0\text{ with equality iff }Q=\lambda\left(\mathbf e_3\otimes \mathbf e_3 -\frac 13 \right)I\text{ for some }\lambda>0.
\end{equation*}
Hence for $h>0$ the full potential
$f(Q)+hg(Q)$
is minimized exactly at $Q=Q_\infty$, where
\begin{equation*}
Q_\infty=\mathbf e_3 \otimes \mathbf e_3 -\frac 13 I.
\end{equation*}
Moreover it is easily checked that
\begin{equation}\label{eq:coercpot}
f(Q)+hg(Q)\geq C(h)\abs{Q-Q_\infty}^2,
\end{equation}
for some constant $C(h)>0$. This ensures that the energy is coercive on the affine space $Q_\infty + H^1(\Omega;\mathcal S_0)$.
The anchoring at the particle surface is assumed to be radial:
\begin{equation}\label{eq:Qb}
Q=Q_b:=\mathbf e_r\otimes \mathbf e_r -\frac 13 I\qquad\text{on }\partial \Omega,\qquad \mathbf e_r=\frac{x}{\abs{x}}.
\end{equation}
Denoting  by $\mathcal H$ the space
\begin{equation}\label{eq:H}
\mathcal H = \left\lbrace Q\in Q_\infty + H^1(\Omega;\mathcal S_0)\colon Q=Q_b\text{ on }\partial\Omega\right\rbrace,
\end{equation}
the coercivity of the energy ensures existence of a minimizer in $\mathcal H$ for any $\xi,\eta>0$.

In \cite[\S~3.1]{fukudayokoyama06},  heuristic arguments are used to estimate the behavior of the different terms in the energy for a `hedgehog' configuration and for  a `Saturn ring' configuration. Decomposing the energy as $E=E_{nem}+E_{mag}$, where
\begin{gather*}
E_{nem}=\int_{\R^3\setminus B} \left( \frac 12\abs{\nabla Q}^2 +\frac{1}{\xi^2}f(Q)  \right),\qquad
E_{mag}=\frac{1}{\eta^2}\int_{\R^3\setminus B} g(Q),
\end{gather*}
they conjecture the asymptotics
\begin{align*}
\text{hedgehog:}&\quad E_{nem} \approx 1,\; E_{mag}\approx \frac{1}{\eta},\\
\text{Saturn ring:}&\quad E_{nem} \approx \abs{\ln\xi},\; E_{mag}\approx 1.
\end{align*}
In fact we believe that for the Saturn ring the magnetic part of the energy should also be of order $1/\eta$ (with a smaller constant though), but this does not affect the conclusion that there should be a critical value
\begin{equation*}
\eta_c\approx\frac{1}{\abs{\ln\xi}},
\end{equation*}
with the following properties. If $\eta<\eta_c$ (high applied field) then the Saturn ring configuration has lowest energy, and if $\eta>\eta_c$ (low applied field) then the hedgehog configuration has lowest energy.
In  \cite{fukudaetal04,fukudayokoyama06} this conjecture is checked numerically, for $\xi=4\times 10^{-3}$ and $\eta$ around $10^{-1}$ (so that $h$ lies between $10^{-2}$ and $10^{-1}$) in \cite{fukudaetal04}, and  $\xi=\overline R_0^{-1}$ between $10^{-3}$ and $10^{-2}$ and $h$ of the same order in \cite{fukudayokoyama06}.

Our aim in this work is to justify rigorously the fact that the Saturn ring configuration is minimizing for high fields, i.e. $\eta\ll 1/\abs{\ln\xi}$. We will tackle the regime $\eta\gg 1/\abs{\ln\xi}$ in a forthcoming work \cite{lowfield}. Since physically relevant values of $\eta,\xi$ satisfy $\xi\lesssim \eta\ll 1$, we consider the limit $\xi\to 0$ and assume that 
\begin{equation*}
\eta=\eta(\xi)\longrightarrow 0\qquad\text{as }\xi\to 0.
\end{equation*}
With this convention the energy functional depends only on the small parameter $\xi$ and we write
\begin{equation*}
E_\xi(Q;U)=\int_U\left[\frac 12 \abs{\nabla Q}^2 +  \frac{1}{\xi^2}f(Q) + \frac{1}{\eta^2}g(Q)\right],
\end{equation*}
for any measurable set $U\subset\Omega=\mathbb R^3\setminus B$ and $Q\in Q_\infty + H^1(\Omega;\mathcal S_0)$.

One way to characterize a Saturn ring configuration versus a hedgehog configuration is its mirror symmetry: a Saturn ring configuration is symmetric with respect to the equatorial plane $\lbrace x_3=0\rbrace$, while a hedgehog configuration is not; that is,
\begin{equation*}
E_\xi(Q^{Saturn};\Omega_+)=E_\xi(Q^{Saturn};\Omega_-),\qquad\Omega_\pm = \Omega\cap\lbrace \pm x_3 >0\rbrace,
\end{equation*}
while for a hedgehog configuration these energies are different. Our first main result shows that a minimizing configuration has to satisfy this symmetry asymptotically if $\xi\lesssim \eta\ll\abs{\ln\xi}^{-1}$.

\begin{theorem}\label{t:sym}
If $Q_\xi$ minimizes $E_\xi$ with boundary conditions \eqref{eq:Qb} and 
\begin{equation*}
\eta(\xi)\abs{\ln\xi}\to 0,\qquad \frac{\eta(\xi)}{\xi}\to\lambda\in (0,\infty],
\end{equation*}
then
\begin{equation*}
E_\xi(Q_\xi;\Omega_+)\sim E_\xi(Q_\xi;\Omega_-)\qquad\text{as }\xi\to 0.
\end{equation*}
\end{theorem}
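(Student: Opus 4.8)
The plan is to prove the symmetry statement via a matching upper/lower bound argument for the energy, showing that both the full minimal energy and the energies on the two half-domains $\Omega_\pm$ are asymptotically $\abs{\ln\xi}$ (up to a universal constant), so that their ratio tends to $1$. First I would establish the \emph{upper bound} $E_\xi(Q_\xi;\Omega)\lesssim \abs{\ln\xi}$ by constructing an explicit mirror-symmetric competitor of Saturn-ring type: take an $\mathbb S^2$-valued director field on $\Omega$ that equals $\mathbf e_r$ near $\partial\Omega$, interpolates to a radial hedgehog-like field, and is modified in an annular region near the equator to carry a degree-$\pm 1/2$ ring defect, regularized at scale $\xi$; one checks $E_{nem}\lesssim\abs{\ln\xi}$ in the standard way and, using that $\eta\abs{\ln\xi}\to 0$ together with $g(Q)=c(1-n_3^2)$ bounded, that $E_{mag}=\eta^{-2}\int g(Q)\lesssim \eta^{-2}\cdot(\text{volume weight})$, which must be arranged to be $o(\abs{\ln\xi})$ — this requires the competitor to relax to $Q_\infty$ on a length scale comparable to, or smaller than, $\eta$ away from the particle, so the magnetic cost is $O(\eta^{-2}\cdot\eta^{?})$; the precise exponent has to make this $\lesssim\abs{\ln\xi}$, which is where the hypothesis $\eta\abs{\ln\xi}\to 0$ enters.

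Next I would prove the \emph{lower bound} $E_\xi(Q_\xi;\Omega_\pm)\gtrsim \abs{\ln\xi}$ on \emph{each} half-domain separately. The point is topological: the boundary data $Q_b=\mathbf e_r\otimes\mathbf e_r-\frac13 I$ on the upper hemisphere $\partial\Omega\cap\{x_3>0\}$, together with convergence to $Q_\infty$ far away (a consequence of the coercivity \eqref{eq:coercpot} and an energy bound), forces a nontrivial topological obstruction in $\Omega_+$, and likewise in $\Omega_-$. Quantitatively I would slice $\Omega_+$ by spheres (or by suitable surfaces) of radius $r\in(1,2)$ and run a $2$D lower bound on each slice: on most slices the restriction of $Q_\xi$ is close to the uniaxial manifold away from a small bad set, has a well-defined degree that is forced to jump between $r\approx 1$ (where it sees $\mathbf e_r$, degree $1$) and $r$ large (where it sees $Q_\infty$, degree $0$), and hence a $2$D energy of order $\abs{\ln\xi}$ by the Ginzburg–Landau-type lower bound; integrating over $r$ gives the claim. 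The degree-$1/2$ subtlety (line defects in $Q$-tensor theory carry half-integer winding because $Q$ lives in $\mathbb{RP}^2$) means one should phrase the degree argument on the real projective target, but the logarithmic lower bound survives.

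Finally, combining the two: the upper bound gives $E_\xi(Q_\xi;\Omega)=E_\xi(Q_\xi;\Omega_+)+E_\xi(Q_\xi;\Omega_-)\lesssim \abs{\ln\xi}$, while the lower bound gives each of the two summands $\gtrsim \abs{\ln\xi}$; together these pin both $E_\xi(Q_\xi;\Omega_\pm)$ between two constant multiples of $\abs{\ln\xi}$. To upgrade this boundedness of the ratio to the asymptotic equivalence $E_\xi(Q_\xi;\Omega_+)\sim E_\xi(Q_\xi;\Omega_-)$, I would sharpen both bounds to identify the leading constant: show $E_\xi(Q_\xi;\Omega)=(\pi+o(1))\abs{\ln\xi}$ (or whatever the correct constant is for a half-degree ring defect) by a $\Gamma$-convergence / blow-up analysis near the defect ring, and show the \emph{same} leading-order constant is carried on each side by the mirror-symmetric structure of the defect — or, more robustly, argue that if the ratio did not tend to $1$ then one could symmetrize the lower-energy half (reflect $Q_\xi(x',x_3)$ to $(x',-x_3)$, possibly composing with the involution $Q\mapsto {}^tRQR$ fixing $e_3$) to produce a competitor with strictly less energy than the minimum, contradicting minimality once the error terms (controlled by $\eta\abs{\ln\xi}\to 0$) are shown to be lower order. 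The main obstacle I anticipate is precisely this last step: controlling the interaction/matching error along the gluing interface $\{x_3=0\}$ well enough to make the reflection argument rigorous, since one must ensure the reflected competitor is admissible in $\mathcal H$ and that the magnetic term does not spoil the strict inequality — this is where the quantitative hypothesis $\eta(\xi)\abs{\ln\xi}\to 0$ and the lower bound $\eta/\xi\to\lambda>0$ (preventing the nematic core from being overwhelmed) must be used carefully.
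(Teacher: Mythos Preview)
Your proposal rests on the wrong energy scale. Under the hypothesis $\eta|\ln\xi|\to 0$ the minimal energy is of order $1/\eta$, not $|\ln\xi|$; indeed $|\ln\xi|=o(1/\eta)$. The dominant cost is not the ring line defect (which contributes only $O(|\ln\xi|)$, a lower-order term here) but the boundary layer of width $\eta$ in which the configuration must transition from the anchoring $Q_b$ to $Q_\infty$: over that shell the gradient and the magnetic term $\eta^{-2}g(Q)$ each cost $\sim 1/\eta$ after integration. Your upper-bound sketch implicitly contains this layer (``relax to $Q_\infty$ on a length scale comparable to $\eta$''), so an $O(|\ln\xi|)$ upper bound is impossible. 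Consequently your topological lower bound $E_\xi(Q_\xi;\Omega_\pm)\gtrsim|\ln\xi|$, even if correct, is far too weak to control the ratio: two quantities of order $1/\eta$ can have any fixed ratio and still each dominate $|\ln\xi|$. The reflection/symmetrization argument at the end would require matching upper and lower bounds at the correct scale $1/\eta$ with the \emph{same} leading constant on each half, which your outline does not provide.

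The paper's route is entirely different and avoids topology altogether at leading order. It establishes a \emph{localized} asymptotic (Theorem~\ref{t:asympt}): for every measurable $U\subset\mathbb S^2$,
\[
E_\xi\bigl(Q_\xi;\Omega\cap\mathcal C(U)\bigr)=\frac 1\eta \int_U D_\lambda(Q_b(\omega))\,d\mathcal H^2(\omega)+o\!\left(\frac 1\eta\right),
\]
where $D_\lambda(Q_0)$ is the minimum of the one-dimensional radial heteroclinic functional $F_\lambda$ connecting $Q_0$ to $Q_\infty$. The lower bound comes simply from dropping the angular derivatives and rescaling $r=1+\eta(\tilde r-1)$; the matching upper bound from patching rescaled 1D minimizers, with the equatorial ring inserted at cost $O(|\ln\xi|)=o(1/\eta)$. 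Theorem~\ref{t:sym} is then a one-line corollary: apply this with $U=\mathbb S^2\cap\{\pm x_3>0\}$ and note that $Q_b$ is invariant under reflection across $\{x_3=0\}$, so the two integrals of $D_\lambda(Q_b)$ coincide. No degree argument and no reflection of the minimizer is needed.
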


Theorem~\ref{t:sym} is a consequence of the more precise asymptotics we obtain for the energy of a minimizer. The potential
\begin{equation*}
\frac{1}{\xi^2}f(Q)+\frac{1}{\eta^2}g(Q) = \frac{1}{\eta^2}\left(\frac{\eta^2}{\xi^2}f(Q)+g(Q)\right),
\end{equation*}
is minimized at $Q=Q_\infty$. As $\eta\to 0$ this forces a minimizing configuration to be very close to $Q_\infty$. The boundary data $Q_b$ satisfies $f(Q_b)\equiv 0$ but not $g(Q_b)\equiv 0$. Not surprisingly, deformations concentrate in a boundary layer of size $\eta$, where a one-dimensional transition takes place according to the energy
\begin{equation}\label{Flambda}
F_\lambda(Q)=\int_1^\infty \left[\frac 12  \abs{\frac{d Q}{d r}}^2 +\lambda^2 f(Q)+g(Q)\right] \, dr,
\end{equation}
defined for $Q\in Q_\infty + H^1((1,\infty);\mathcal S_0)$.
For $\lambda=\infty$ this formula should be understood as
\begin{equation*}
F_\infty(Q)=
\left\lbrace
\begin{aligned}
&\int_1^\infty \left[\frac 12  \abs{\frac{d Q}{d r}}^2 + g(Q)\right] \, dr &&\text{ if }f(Q)=0\text{ a.e.},\\
&+\infty && \text{ otherwise.}
\end{aligned}
\right.
\end{equation*}
In other words, $F_\infty$ is finite for maps $Q\in Q_\infty + H^1((1,\infty);\mathcal S_0)$ which satisfy $Q=n\otimes n -I/3$ for some measurable map $n\colon (0,\infty)\to\mathbb S^2$.

Obviously the cases $\lambda\in (0,\infty)$ and $\lambda=\infty$ are quite different and they require separate treatments, but in both cases we obtain for the energy of a minimizer $Q_\xi$ the asymptotics 
\begin{equation*}
E_\xi(Q_\xi;\Omega)=\frac{1}{\eta}\int_{\mathbb S^2}D_\lambda(Q_b(\omega))\,d\mathcal H^2(\omega) + o\left(\frac 1\eta\right)\qquad\text{as }\xi\to 0,
\end{equation*}
where
\begin{equation}\label{eq:Dlambda}
D_\lambda(Q_0)=\min\left\lbrace F_\lambda(Q)\colon Q\in Q_\infty +H^1((1,\infty);\mathcal S_0),\,Q(1)=Q_0\right\rbrace.
\end{equation}
The existence of a minimizer of $F_\lambda$ which attains $D_\lambda(Q_b(\omega))$ for any $\omega\in\Stwo$ follows from the direct method.  In section~\ref{ss:upII} we will exploit the observation of Sternberg \cite{St91} that the heteroclinic connections which minimize $F_\lambda$ represent geodesics for a degenerate metric.
In the case $\lambda=\infty$ this enables us to obtain an exact value for the limiting energy,
\beqn\label{liminfty}  D_\infty(Q_b(\theta,\varphi))=
     \kappa(1-|\cos\theta|), \quad \text{and}
\quad \lim_{\xi\to 0} \eta\, E_\xi(Q_\xi;\Om) = 2\pi\kappa,
\eeqn
where $\kappa:=\sqrt[4]{24}$. (See Lemma~\ref{L:geod}.)

More specifically, we obtain local asymptotics in angular subdomains of $\Omega$. For $U\subset\mathbb S^2$ we denote by $\mathcal C(U)$ the cone
\begin{equation*}
\mathcal C(U)=\left\lbrace t\omega\colon t>1,\,\omega\in U\right\rbrace,
\end{equation*}
and prove
\begin{theorem}\label{t:asympt}
If $Q_\xi$ minimizes $E_\xi$ with boundary conditions \eqref{eq:Qb} and 
\begin{equation*}
\eta(\xi)\abs{\ln\xi}\to 0,\qquad \frac{\eta(\xi)}{\xi}\to\lambda\in (0,\infty],
\end{equation*}
then for any measurable set $U\subset\mathbb S^2$ it holds
\begin{equation*}
E_\xi(Q_\xi;\Omega\cap\mathcal C(U))=\frac 1\eta \int_{U} D_\lambda(Q_b(\omega))\,d\mathcal H^2(\omega)+o\left(\frac 1\eta\right)\qquad\text{as }\xi\to 0.
\end{equation*}
\end{theorem}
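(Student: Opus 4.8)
\medskip
\noindent\emph{Plan of proof.}
I would prove Theorem~\ref{t:asympt} by establishing two matching estimates together with a localization argument. The first is a \emph{global upper bound},
\beqn
\limsup_{\xi\to 0}\,\eta\, E_\xi(Q_\xi;\Om)\le\int_{\Stwo}D_\lambda(Q_b(\omega))\,d\mathcal H^2(\omega),
\eeqn
obtained by testing the minimality of $Q_\xi$ against a suitable competitor; the second is a \emph{local lower bound},
\beqn
\liminf_{\xi\to 0}\,\eta\, E_\xi(Q;\Om\cap\mathcal C(V))\ge\int_{V}D_\lambda(Q_b(\omega))\,d\mathcal H^2(\omega),
\eeqn
valid for \emph{every} measurable $V\subset\Stwo$ and \emph{every} $Q\in\mathcal H$, not just for the minimizer. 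Granting these, I would apply the lower bound with $V=U$ and with $V=\Stwo\setminus U$, add the two inequalities, and compare with the global upper bound, using $\int_{\Stwo}=\int_U+\int_{\Stwo\setminus U}$: since $a_\xi,b_\xi\ge 0$ with $\liminf a_\xi\ge a$, $\liminf b_\xi\ge b$ and $a_\xi+b_\xi\to a+b<\infty$ force $a_\xi\to a$, this yields $\eta\,E_\xi(Q_\xi;\Om\cap\mathcal C(U))\to\int_U D_\lambda(Q_b(\omega))\,d\mathcal H^2(\omega)$, which is the assertion (the case $U=\Stwo$ moreover recovers the global asymptotics quoted above).

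For the local lower bound, work in spherical coordinates $x=r\omega$, discard the angular part of $\abs{\nabla Q}^2$ and use $r^2\ge 1$ to get
\beqn
E_\xi(Q;\Om\cap\mathcal C(V))\ge\int_V\left(\int_1^\infty\Big[\tfrac12\abs{\partial_r Q(r\omega)}^2+\tfrac1{\xi^2}f(Q(r\omega))+\tfrac1{\eta^2}g(Q(r\omega))\Big]dr\right)d\mathcal H^2(\omega).
\eeqn
By Fubini, for a.e.\ $\omega$ the radial slice $r\mapsto Q(r\omega)-Q_\infty$ lies in $H^1((1,\infty);\mathcal S_0)$ and equals $Q_b(\omega)-Q_\infty$ at $r=1$; the substitution $r=1+\eta(\rho-1)$ turns the inner integral into $\tfrac1\eta F_{\eta/\xi}(\widetilde q_\omega)\ge\tfrac1\eta D_{\eta/\xi}(Q_b(\omega))$. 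Thus $\eta\,E_\xi(Q;\Om\cap\mathcal C(V))\ge\int_V D_{\eta/\xi}(Q_b(\omega))\,d\mathcal H^2(\omega)$, and the bound follows from Fatou's lemma and the convergence $D_\mu(Q_0)\to D_\lambda(Q_0)$ as $\mu\to\lambda\in(0,\infty]$. That convergence I would isolate as a short lemma: for $\lambda<\infty$ by inserting each minimizer of $F_\mu$, $F_\lambda$ into the other functional and using the uniform bound $\mu^2\int_1^\infty f\le D_\mu\le C$; for $\lambda=\infty$ by noting that $\mu\mapsto D_\mu$ is nondecreasing with $D_\mu\le D_\infty$ (test with an $F_\infty$-minimizer, which has $f\equiv0$), together with a compactness argument for the reverse inequality in which near-minimizers $Q_\mu$ of $F_\mu$ satisfy $\int f(Q_\mu)\to0$, converge weakly in $H^1_{\mathrm{loc}}$ to a competitor admissible for $F_\infty$, and obey $F_\infty\le\liminf F_\mu$ by weak lower semicontinuity and Fatou.

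For the global upper bound I would build a competitor $\widetilde Q_\xi\in\mathcal H$ by gluing rescaled optimal one-dimensional profiles over $\Stwo$. Fixing, for each $\omega$, a minimizer $P_\omega$ attaining $D_\lambda(Q_b(\omega))$ (chosen measurably --- since $D_\lambda(Q_b(\omega))$ depends only on the polar angle, finitely many representative latitudes suffice), set $\widetilde Q_\xi(r\omega)=P_\omega(1+(r-1)/\eta)$, so the transition occupies a layer of thickness $\sim\eta$. The change of variables above turns the radial contribution on a ray into $\tfrac1\eta F_{\eta/\xi}(P_\omega)+O(1)$, which --- using the exponential decay of $P_\omega$ to $Q_\infty$ coming from the nondegeneracy~\eqref{eq:coercpot}, and $F_{\eta/\xi}(P_\omega)\to F_\lambda(P_\omega)=D_\lambda(Q_b(\omega))$ --- integrates to $\tfrac1\eta\int_{\Stwo}D_\lambda(Q_b(\omega))\,d\mathcal H^2(\omega)+o(1/\eta)$; the same decay keeps the angular-gradient term $O(\eta)$. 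The delicate point is the equatorial circle $\{r=1,\ \theta=\pi/2\}$, where the profiles from the two hemispheres cannot be joined continuously: for $\lambda=\infty$ they connect $Q_b(\omega)$ to $Q_\infty$ through directors running to opposite poles, paths that are not homotopic rel endpoints in $\mathbb{RP}^2$, and indeed no defect-free extension of $Q_b$ exists. I would reconcile them in a thin annular tube around this circle by inserting a standard melting/disclination profile at scale $\xi$, of energy $O(\abs{\ln\xi})$, which is $o(1/\eta)$ precisely because $\eta\abs{\ln\xi}\to0$. (For $\lambda<\infty$ one may alternatively interpolate through biaxial states over an angular layer of width $\delta$ with $\eta^2\ll\delta\to0$, at cost $O(\delta\eta/\xi^2)+O(\eta/\delta)=o(1/\eta)$.) Then $E_\xi(Q_\xi;\Om)\le E_\xi(\widetilde Q_\xi;\Om)$ gives the bound.

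The main obstacle is this upper-bound construction around the equatorial ring: one must realize the optimal one-dimensional profiles on almost all of $\Stwo$, produce an honest $H^1$ competitor whose unavoidable defect costs only $O(\abs{\ln\xi})=o(1/\eta)$, and hold every gluing error at that same order. The supporting analytic inputs --- the convergence $D_\mu\to D_\infty$ of penalized to constrained energies, and the geodesic evaluation of $D_\infty$ furnished by Lemma~\ref{L:geod} --- are what pin down the limiting constant, while the lower bound is by comparison a soft slicing estimate once that convergence is available.
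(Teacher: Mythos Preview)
Your overall architecture --- local lower bound by radial slicing plus Fatou and $D_\mu\to D_\lambda$, global upper bound by a competitor, and localization to $U$ via the complement trick $E_\xi(Q_\xi;\mathcal C(U))=E_\xi(Q_\xi;\Om)-E_\xi(Q_\xi;\mathcal C(\Stwo\setminus U))$ --- is exactly the paper's. The lower bound and the $\lambda=\infty$ upper bound (explicit geodesics from Lemma~\ref{L:geod} away from the equator, a $-\tfrac12$ disclination costing $O(\abs{\ln\xi})$ in a tube of size $\eta$ around it) match Proposition~\ref{p:low} and Proposition~\ref{p:upII} closely.

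The one genuine gap is in your upper bound for $\lambda\in(0,\infty)$. You set $\widetilde Q_\xi(r\omega)=P_\omega\bigl(1+(r-1)/\eta\bigr)$ with $P_\omega$ a minimizer of $F_\lambda$, and assert that ``the same decay keeps the angular-gradient term $O(\eta)$''. Exponential decay of $P_\omega(\rho)-Q_\infty$ in $\rho$ (which does follow from \eqref{eq:coercpot}) says nothing about $\partial_\theta P_\omega$; controlling that derivative would require the minimizer of $F_\lambda$ to depend smoothly on its boundary datum, hence in particular to be unique, and neither is established. The paper flags this as precisely the obstacle to the naive ray-by-ray construction. Its fix (Proposition~\ref{p:upI}) is to avoid the regularity of $\theta\mapsto P_\theta$ altogether: choose compactly supported near-minimizers $\widetilde Q_i^h$ at \emph{finitely many} latitudes $\theta_i^h$, define $\widetilde Q^h$ to be piecewise constant in $\theta$, and mollify in $\theta$. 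The upper bound then reduces to Riemann integrability of $\theta\mapsto D_\lambda(Q_b(\theta,0))\sin\theta$, which follows from the Lipschitz continuity of $Q_0\mapsto D_\lambda(Q_0)$ (Lemma~\ref{l:Dlip}). Your parenthetical ``finitely many representative latitudes suffice'' gestures toward this, but as written your competitor is defined with a profile for every $\omega$ and the angular estimate is not justified; and your $\lambda<\infty$ interpolation alternative is placed at the equator only, whereas for finite $\lambda$ the equator is not special --- the difficulty is the unknown $\theta$-regularity of $P_\theta$ everywhere.
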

Theorem~\ref{t:sym} follows trivially from Theorem~\ref{t:asympt} by applying the latter to $U=(\mathbb S^2)_\pm = \mathbb S^2\cap\lbrace \pm x_3>0\rbrace$, since $Q_b$ is symmetric under reflection with respect to the equatorial plane.

The lower bound in Theorem~\ref{t:asympt} follows from an elementary rescaling and the properties of $\lambda\mapsto D_\lambda$. To obtain an upper bound matching this lower bound, a first approach  would be to define a trial map $Q$ on every radial direction by an appropriate rescaling of a minimizer of $F_\lambda$, i.e. set
\begin{equation*}
Q(r\omega)=Q_\omega\left(1+\frac{r-1}{\eta}\right)\qquad\text{for }(r,\omega)\in (1,\infty)\times\mathbb S^2,
\end{equation*}
with $Q_\omega$ minimizing $F_\lambda$ under the constraint $Q_\omega(1)=Q_b(\omega)$. The problem with this approach is that it may not be possible to control the derivatives of such $Q$ with respect to angular variable $\omega$. We overcome this difficulty by using different arguments in the cases
$\lambda\in (0,\infty)$ and $\lambda=\infty$. 

For $\lambda\in (0,\infty)$ we  take advantage of the fact that, although the regularity of $\omega\mapsto Q_\omega$ is not understood, the map $\omega\mapsto F_\lambda(Q_\omega)=D_\lambda(Q_b(\omega))$ is easily seen to be continuous, hence Riemann integrable. Thus we obtain a trial map by smoothly interpolating between $Q_{\omega_i}(r)$ for a discrete set  $\lbrace \omega_i\rbrace$. The cost of this weak approach is that we cannot hope to obtain a more precise remainder than $o(1/\eta)$.

For $\lambda=\infty$ the map $Q_\omega$ takes  the form $n\otimes n-I/3$ and this restriction allows to specify its dependence on $\omega$. However the topological constraint enforced by the boundary conditions prevents it to be smooth : there is a jump as $\omega$ crosses the equatorial plane $\lbrace x_3=0\rbrace$. We modify the trial map near this plane by including a Saturn ring defect which rectifies the topological charge. With this approach the remainder in the upper bound is actually of the order $O(\abs{\ln\xi})$.

Finally, it is natural and tempting to make a direct comparison between the symmetric minimizer (which we expect to represent the Saturn ring) and its usual competitor, the dipolar hedgehog. The difficulty is that we do not know if there exists such a solution, nor how to impose constraints under which there would be a minimizer of this form. However, we can restrict our attention to uniaxial tensors with oriented director fields, $Q=n\otimes n -{\frac13} I$, $n\in \mathcal N$, where 
$$\mathcal N:= \left\{ n\in H^1_{loc}(\Omega;\Stwo): \  n|_{\partial \Omega} =\mathbf e_r, \ \int_\Om (n_1^2+n_2^2) dx<\infty.\right\}$$
 Within this  orientable setting, the Saturn ring line defect is not admissible anymore since it carries a half-integer degree \cite{ballzarnescu11}.  We show that orientable configurations have much larger energy at leading order:

\begin{proposition}\label{p:enercomp} Let $Q_\xi$ minimize $E_\xi$ with boundary conditions \eqref{eq:Qb} and $\eta=\eta(\xi)$ with
\begin{equation*}
\eta\abs{\ln\xi}\to 0,\qquad \frac{\eta}{\xi}\to\lambda\in (0,\infty].
\end{equation*}
Then, for any $Q=n\otimes n -{\frac13} I$ with $n\in \mathcal N$, and any $\xi>0$, we have 
$$  \eta E_\xi(Q)\ge 8\pi\kappa \ge 4 \lim_{\xi\to 0
}\left(\eta E_\xi(Q_\xi)\right).  $$
\end{proposition}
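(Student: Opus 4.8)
The plan is to use crucially that an oriented uniaxial tensor field lies on the zero set of the nematic potential. For $Q=n\otimes n-\tfrac13 I$ with $n\in\mathcal N$ one has $f(Q)\equiv 0$, $\abs{\nabla Q}^2=2\abs{\nabla n}^2$, and $g(Q)=\tfrac{\sqrt6}{2}(1-n_3^2)$, so that
\begin{equation*}
E_\xi(Q)=\int_\Om\left(\abs{\nabla n}^2+\frac{1}{\eta^2}\,\frac{\sqrt6}{2}\,(1-n_3^2)\right)dx,
\end{equation*}
which depends on $\xi$ only through $\eta=\eta(\xi)$; it thus suffices to prove $\eta E_\xi(Q)\ge 8\pi\kappa$ for every fixed $\eta>0$. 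I would then run the "geodesic projection'' argument of Section~\ref{ss:upII}, but keeping only the coordinate $n_3$: writing $n=(\sin\theta\cos\varphi,\sin\theta\sin\varphi,\cos\theta)$ away from the poles one has $\abs{\nabla n_3}=\sin\theta\,\abs{\nabla\theta}\le \sin\theta\,\abs{\nabla n}=\sqrt{1-n_3^2}\,\abs{\nabla n}$, and since $2\sqrt{\sqrt6/2}=\sqrt[4]{24}=\kappa$, the arithmetic–geometric inequality yields the pointwise bound
\begin{equation*}
\abs{\nabla n}^2+\frac{1}{\eta^2}\,\frac{\sqrt6}{2}\,(1-n_3^2)\ \ge\ \frac{2}{\eta}\sqrt{\tfrac{\sqrt6}{2}}\,\sqrt{1-n_3^2}\,\abs{\nabla n}\ \ge\ \frac{\kappa}{\eta}\,\abs{\nabla n_3}.
\end{equation*}
Integrating over $\Om$ reduces the first inequality of the Proposition to the purely geometric estimate $\int_\Om\abs{\nabla n_3}\,dx\ge 8\pi$.

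For this I would use the coarea formula $\int_\Om\abs{\nabla n_3}\,dx=\int_{-1}^{1}\mathcal H^2(\Sigma_s)\,ds$, with $\Sigma_s:=\{n_3=s\}\cap\Om$, and show that $\mathcal H^2(\Sigma_s)\ge 4\pi$ for a.e.\ $s\in(-1,1)$. The mechanism is topological. Since a finite–energy $n\in\mathcal N$ has $1-n_3^2\in L^1(\Om)$, one first rules out (by an energy argument) an interface propagating to infinity, so $n_3\to\varepsilon$ at infinity for some fixed $\varepsilon\in\{\pm1\}$; say $\varepsilon=1$. Then $\{n_3\le s\}$ is bounded, so $G_s:=(\{n_3\le s\}\cap\Om)\cup B$ is a bounded set containing the particle $B\ni 0$, whose reduced boundary is $\Sigma_s$ together with the spherical cap $\{x\in\partial\Om:\ x_3\ge s\}$ (on $\partial\Om$ the director equals $\mathbf e_r$, so $n_3=x_3$ there). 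The radial projection $x\mapsto x/\abs{x}$ is $1$–Lipschitz and, because $\partial G_s$ encloses the origin, surjective onto $\Stwo$; being also area non–increasing on $\{\abs{x}\ge1\}\supset\partial G_s$, it forces $\mathcal H^2(\partial G_s)\ge 4\pi$, i.e.\ $\mathcal H^2(\Sigma_s)\ge 4\pi-2\pi(1-s)$. The remaining gain of a factor $2$ is exactly where orientability is used: because $n$ is $\Stwo$–valued and not $\mathbb{RP}^2$–valued, the boundary–layer profiles cannot relax to the nearer pole, so the deformation is "one–sided'' and $\Sigma_s$ itself — not merely its union with a polar cap — must wrap around $B$; this upgrades the estimate to $\mathcal H^2(\Sigma_s)\ge 4\pi$, and integrating in $s$ gives $\int_\Om\abs{\nabla n_3}\ge 8\pi$ and hence $\eta E_\xi(Q)\ge 8\pi\kappa$.

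For the second inequality I would invoke Theorem~\ref{t:asympt} with $U=\Stwo$, which gives $\eta E_\xi(Q_\xi)\to\int_{\Stwo}D_\lambda(Q_b(\omega))\,d\mathcal H^2(\omega)$ as $\xi\to0$. Every admissible $Q$ with $f(Q)\equiv0$ and $Q(1)=Q_0$ satisfies $F_\lambda(Q)=F_\infty(Q)$, so $D_\lambda(Q_0)\le D_\infty(Q_0)$; by Lemma~\ref{L:geod} and \eqref{liminfty}, $D_\infty(Q_b(\omega))=\kappa(1-\abs{\cos\theta})$ and $\int_{\Stwo}\kappa(1-\abs{\cos\theta})\,d\mathcal H^2(\omega)=2\pi\kappa$. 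Therefore $\lim_{\xi\to0}\eta E_\xi(Q_\xi)\le 2\pi\kappa$, which closes the chain $8\pi\kappa\ge 4\lim_{\xi\to0}\eta E_\xi(Q_\xi)$.

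The delicate point — and the step I expect to be the real obstacle — is the sharp geometric bound $\mathcal H^2(\Sigma_s)\ge 4\pi$ for a.e.\ $s$: one must deal with configurations that do not converge to a single pole at infinity, handle the level surfaces of a merely $H^1$ map through sets of finite perimeter and coarea slicing, and above all convert the orientability obstruction into the quantitative doubling of the bound that the enclosing–surface argument provides only when one polar cap is allowed.
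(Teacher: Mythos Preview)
Your second inequality matches the paper's. For the first, the paper takes a much simpler route than coarea: it slices radially. After dropping angular derivatives and rescaling $r=1+\eta t$, for a.e.\ $\omega\in\Stwo$ the path $t\mapsto n(1+\eta t,\omega)$ connects $\mathbf e_r(\omega)$ to its limit at infinity. Orientability enters once and cleanly, to force this limit to be the \emph{same} pole for a.e.\ $\omega$ (since $n(r,\cdot)\in H^1(\Stwo;\Stwo)$ for a.e.\ $r$ and the two possible limits are isolated). Taking it to be $+\mathbf e_3$, each radial path is admissible for $d_\infty^+(\omega)$, and Lemma~\ref{L:geod} gives directly
\[
\eta E_\xi(Q)\ \ge\ \int_{\Stwo}d_\infty^+(\omega)\,d\mathcal H^2(\omega)\ =\ \kappa\int_{\Stwo}(1-\cos\theta)\,d\mathcal H^2(\omega).
\]
No level-set geometry is needed.

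Your coarea route has a genuine gap exactly where you flagged it. The claim $\mathcal H^2(\Sigma_s)\ge 4\pi$ is false, not merely unproven: test it on the oriented optimal profile itself, with $n_3$ the rescaled heteroclinic \eqref{eq:hetero}. Along each ray $n_3$ increases monotonically from $\cos\theta$ to $1$, so $\Sigma_s$ is a graph over the cap $\{\cos\theta<s\}\subset\Stwo$ and, as $\eta\to 0$, collapses onto that cap of area $2\pi(1+s)$ --- arbitrarily small for $s$ near $-1$. Orientability has already been spent in forcing a common limit at infinity (this is precisely what makes $\{n_3\le s\}$ bounded); there is no second topological obstruction left to double the bound. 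Your enclosing-surface estimate $\mathcal H^2(\Sigma_s)\ge 2\pi(1+s)$ is therefore already sharp; it integrates to $\int_\Om\abs{\nabla n_3}\ge 4\pi$ and hence $\eta E_\xi(Q)\ge 4\pi\kappa$, which is also what the paper's radial integral $\kappa\int_{\Stwo}(1-\cos\theta)\,d\mathcal H^2$ actually evaluates to. The discrepancy with the stated $8\pi\kappa$ lies in the constant of the statement, not in either method; but the attempted doubling via ``$\Sigma_s$ itself must wrap around $B$'' is an error, not a fillable step.
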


\medskip

The paper is organized as follows. In \S~\ref{s:low} we prove the lower bound. In \S~\ref{s:up} we concentrate on the upper bound, considering the case $\lambda\in (0,\infty)$ in \S~\ref{ss:upI} and $\lambda=\infty$ in \S~\ref{ss:upII}. We conclude with the short proofs of Theorem~\ref{t:asympt} and Proposition~\ref{p:enercomp} in \S~\ref{s:proof}.

\paragraph{Acknowledgements:} We thank E.C.~Gartland for useful discussions on nondimensionalization.  SA and LB were supported by NSERC (Canada) Discovery Grants.

\section{Lower bound}
\label{s:low}

In this section we prove the
\begin{proposition}\label{p:low}
If $Q_\xi$ minimizes $E_\xi$ with boundary conditions \eqref{eq:Qb} and 
\begin{equation*}
 \eta=\eta(\xi)\to 0, \quad \frac{\eta}{\xi}\to\lambda\in (0,\infty], 
\end{equation*}
then for any measurable set $U\subset\mathbb S^2$ it holds
\begin{equation*}
\liminf_{\xi\to 0} \eta E_\xi(Q_\xi;\Omega\cap\mathcal C(U))\geq \int_{U} D_\lambda(Q_b(\omega))\,d\mathcal H^2(\omega).
\end{equation*}
\end{proposition}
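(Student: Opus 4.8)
The plan is to reduce the three-dimensional energy lower bound to a fiber-wise one-dimensional problem by slicing $\Omega\cap\mathcal C(U)$ along radial rays, and then to recognize each fiber's energy as (essentially) an instance of $F_\lambda$, whose infimum with the given boundary value is by definition $D_\lambda$. Write points of $\Omega$ in polar coordinates $x=r\omega$, $r>1$, $\omega\in\mathbb S^2$, so that $dx = r^2\,dr\,d\mathcal H^2(\omega)$ and $\abs{\nabla Q}^2 \geq \abs{\partial_r Q}^2$. Dropping the angular part of the gradient and using $r^2\geq 1$ on $\Omega$, we get
\begin{equation*}
E_\xi(Q_\xi;\Omega\cap\mathcal C(U)) \geq \int_U \left(\int_1^\infty \left[\frac 12\abs{\partial_r Q_\xi(r\omega)}^2 + \frac{1}{\xi^2} f(Q_\xi(r\omega)) + \frac{1}{\eta^2} g(Q_\xi(r\omega))\right] r^2\,dr\right) d\mathcal H^2(\omega),
\end{equation*}
and then a further bound $r^2\geq 1$ in the $f$ and $g$ terms is already enough (the $\frac12\abs{\partial_r Q}^2$ term is the delicate one and should be kept with its $r^2$, or handled by the change of variables below — but since $r^2\geq1$ we may also simply drop it there too at the cost of nothing, as $r>1$). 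By Fubini, for a.e.\ $\omega\in U$ the restriction $r\mapsto Q_\xi(r\omega)$ lies in $Q_\infty + H^1((1,\infty);\mathcal S_0)$ and satisfies the boundary condition $Q_\xi(\omega)=Q_b(\omega)$.

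Next I rescale each fiber to match the energy $F_\lambda$. Set $s = 1 + (r-1)/\eta$ and $\widetilde Q_\omega(s) := Q_\xi(r\omega)$; then $dr = \eta\,ds$, $\partial_r = \eta^{-1}\partial_s$, so the fiber energy (after multiplying by $\eta$) becomes
\begin{equation*}
\eta\int_1^\infty\left[\frac12\abs{\partial_r Q_\xi}^2 + \frac{1}{\xi^2}f + \frac{1}{\eta^2}g\right]dr = \int_1^\infty\left[\frac12\abs{\partial_s\widetilde Q_\omega}^2 + \frac{\eta^2}{\xi^2}f(\widetilde Q_\omega) + g(\widetilde Q_\omega)\right]ds \geq D_{\eta/\xi}(Q_b(\omega)),
\end{equation*}
using that $\widetilde Q_\omega(1)=Q_b(\omega)$ and the definition \eqref{eq:Dlambda} of $D_{\eta/\xi}$ with parameter $\lambda$ replaced by $\eta/\xi$. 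Integrating over $U$ and applying Fatou's lemma gives $\liminf_{\xi\to0}\eta E_\xi(Q_\xi;\Omega\cap\mathcal C(U)) \geq \int_U \liminf_{\xi\to0} D_{\eta(\xi)/\xi}(Q_b(\omega))\,d\mathcal H^2(\omega)$. It then remains to show $\liminf_{\xi\to0} D_{\eta/\xi}(Q_b(\omega)) \geq D_\lambda(Q_b(\omega))$ when $\eta/\xi\to\lambda$, i.e.\ a lower semicontinuity property of $\mu\mapsto D_\mu(Q_0)$ as $\mu\to\lambda$ (this includes the case $\lambda=\infty$, where one must check that a sequence of competitors with bounded $F_{\mu_k}$-energy and $\mu_k\to\infty$ has a subsequential $H^1_{loc}$ limit with $f\equiv0$, so the limiting energy is $F_\infty$).

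The main obstacle is this last point — the behavior of $D_\mu$ as $\mu\to\lambda$, especially $\lambda=\infty$. For $\lambda\in(0,\infty)$ it is a routine $\Gamma$-liminf/equicoercivity argument: competitors $Q_k$ with $F_{\mu_k}(Q_k)$ bounded have $\abs{\partial_s Q_k}^2$ and $f(Q_k)$, $g(Q_k)$ bounded in $L^1$, hence (with the coercivity \eqref{eq:coercpot} and $Q_k(1)=Q_0$ fixed) a subsequence converges weakly in $H^1_{loc}$ and strongly in $C^0_{loc}$ to some $Q$, and lower semicontinuity of the Dirichlet term plus Fatou on the potential terms yields $F_\lambda(Q)\leq\liminf F_{\mu_k}(Q_k)$, while $Q(1)=Q_0$ survives; one must also control the tail $s\to\infty$ via the coercivity to ensure $Q\in Q_\infty+H^1$. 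For $\lambda=\infty$ the extra work is to argue that the bounded $f(Q_k)$-term divided through — i.e.\ $\mu_k^2\int f(Q_k)\,ds$ bounded with $\mu_k\to\infty$ forces $\int f(Q_k)\,ds\to0$, hence $f(Q)\equiv0$ in the limit, so $Q$ is admissible for $F_\infty$ and $F_\infty(Q)\leq\liminf F_{\mu_k}(Q_k)$. Modulo this semicontinuity lemma (which the paper presumably isolates, as it is reused for the matching upper bound), the proposition follows.
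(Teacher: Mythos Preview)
Your proposal is correct and follows essentially the same route as the paper: slice radially, drop the angular gradient and the Jacobian factor $r^2\ge 1$, rescale $r=1+\eta(s-1)$ to recognize each fiber energy as $F_{\eta/\xi}$, bound below by $D_{\eta/\xi}(Q_b(\omega))$, and conclude via Fatou together with the continuity (you only need lower semicontinuity) of $\mu\mapsto D_\mu(Q_0)$ at $\mu=\lambda$, which the paper isolates as a separate lemma and proves exactly along the lines you sketch.
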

\begin{proof}
We use spherical coordinates $x=r\omega$, $(r,\omega)\in (1,\infty)\times\mathbb S^2$.
 Setting $r=1+\eta(\tilde r -1)$ and
 \begin{equation*}
 \widetilde Q(\tilde r,\omega)=Q_\xi(1+\eta(\tilde r -1),\omega),
 \end{equation*}
  we have
\begin{align*}
\eta E_\xi(Q_\xi;\mathcal C(U))&
=\int_U \int_1^\infty  \Bigg[\frac 12 \abs{\frac{\partial\widetilde Q}{\partial\tilde r}}^2
+\frac{\eta^2}{r^2}\abs{\nabla_\omega\widetilde Q}^2
+ \frac{\eta^2}{\xi^2}f(\widetilde Q)+g(\widetilde Q)
\Bigg]\,r^2 d\tilde r \, d\mathcal H^2(\omega)\\
&\geq \int_U \int_1^\infty  \Bigg[\frac 12 \abs{\frac{\partial\widetilde Q}{\partial\tilde r}}^2 
+  \frac{\eta^2}{\xi^2}f(\widetilde Q)+g(\widetilde Q)
\Bigg]\, d\tilde r \, d\mathcal H^2(\omega)\\
&\geq \int_U D_{\frac\eta\xi}(Q_b(\omega))\,d\mathcal H^2(\omega),
\end{align*}
using \eqref{Flambda} and \eqref{eq:Dlambda} for the last inequality.
We conclude using the fact (see Lemma~\ref{l:Dlambda} below) that
\begin{equation*}
D_\lambda(Q_b(\omega))=\lim_{\mu\to\lambda} D_\mu(Q_b(\omega))\qquad\forall\omega\in\mathbb S^2,
\end{equation*}
and Fatou's lemma.
\end{proof}

\begin{lemma}\label{l:Dlambda}
For any $Q_0\in\mathcal S_0$ and $\lambda\in (0,\infty]$ we have
\begin{equation*}
D_\lambda(Q_0)=\lim_{\mu\to\lambda}D_\mu(Q_0).
\end{equation*}
\end{lemma}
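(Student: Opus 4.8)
The plan is to prove the two one-sided inequalities $\limsup_{\mu\to\lambda} D_\mu(Q_0)\le D_\lambda(Q_0)$ and $\liminf_{\mu\to\lambda} D_\mu(Q_0)\ge D_\lambda(Q_0)$ separately, treating the finite case $\lambda\in(0,\infty)$ and the case $\lambda=\infty$ with slightly different arguments for the $\limsup$ bound. Throughout, fix $Q_0\in\mathcal S_0$ and note that $D_\mu(Q_0)$ is a nondecreasing function of $\mu$ (since $f\ge 0$, increasing $\mu$ only increases the functional $F_\mu$), so the one-sided limits $\lim_{\mu\uparrow\lambda}$ and $\lim_{\mu\downarrow\lambda}$ both exist in $[0,\infty]$; it suffices to show each equals $D_\lambda(Q_0)$.

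For the upper bound $\limsup_{\mu\to\lambda}D_\mu(Q_0)\le D_\lambda(Q_0)$ when $\lambda<\infty$: take a minimizer $Q^\star$ realizing $D_\lambda(Q_0)$ (existence by the direct method, as noted after \eqref{eq:Dlambda}). Since $F_\mu(Q^\star) = F_\lambda(Q^\star) + (\mu^2-\lambda^2)\int_1^\infty f(Q^\star)\,dr$ and $\int_1^\infty f(Q^\star)\,dr<\infty$ (it is dominated by $\lambda^{-2}F_\lambda(Q^\star)$), we get $D_\mu(Q_0)\le F_\mu(Q^\star)\to F_\lambda(Q^\star)=D_\lambda(Q_0)$ as $\mu\to\lambda$. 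When $\lambda=\infty$ the same competitor works if $D_\infty(Q_0)<\infty$: here a minimizer $Q^\star$ of $F_\infty$ satisfies $f(Q^\star)=0$ a.e., so $F_\mu(Q^\star)=F_\infty(Q^\star)=D_\infty(Q_0)$ for \emph{every} finite $\mu$, giving $\limsup_{\mu\to\infty}D_\mu(Q_0)\le D_\infty(Q_0)$; and this is trivially true if $D_\infty(Q_0)=\infty$.

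For the lower bound $\liminf_{\mu\to\lambda}D_\mu(Q_0)\ge D_\lambda(Q_0)$: pick $\mu_k\to\lambda$ and minimizers $Q_k$ with $F_{\mu_k}(Q_k)=D_{\mu_k}(Q_0)$; we may assume $\sup_k D_{\mu_k}(Q_0)<\infty$ (else nothing to prove), and also assume $\mu_k$ is, say, bounded below by $\lambda/2>0$ when $\lambda<\infty$, or bounded below by $1$ when $\lambda=\infty$. Then $\tfrac12\int_1^\infty |dQ_k/dr|^2 + \int_1^\infty g(Q_k)$ is bounded, and by the coercivity estimate \eqref{eq:coercpot} combined with $g\ge 0$ the $Q_k$ are bounded in $Q_\infty+H^1((1,\infty);\mathcal S_0)$; extracting a weakly convergent subsequence $Q_k\rightharpoonup Q_\infty^\star$ (weakly in $H^1$, hence strongly in $C^0_{loc}$ and pointwise a.e.), the trace constraint $Q_k(1)=Q_0$ passes to the limit. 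Weak lower semicontinuity of the Dirichlet term, Fatou's lemma for the nonnegative potentials $g(Q_k)$ and $f(Q_k)$, together with $\mu_k^2\to\lambda^2$, yield $F_\lambda(Q_\infty^\star)\le\liminf_k F_{\mu_k}(Q_k)$; in the case $\lambda=\infty$ the bound $\liminf_k\mu_k^2\int f(Q_k)<\infty$ with $\mu_k\to\infty$ forces $\int f(Q_\infty^\star)=0$, so $f(Q_\infty^\star)=0$ a.e.\ and $F_\infty(Q_\infty^\star)$ is the correct limiting functional. Hence $D_\lambda(Q_0)\le F_\lambda(Q_\infty^\star)\le\liminf_k D_{\mu_k}(Q_0)$.

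The only mildly delicate point is the interchange $\mu_k^2\int f(Q_k)\ge (\liminf\mu_k^2)\int f(Q_\infty^\star)$ needed when $\lambda=\infty$: one splits off a fixed lower bound $\mu_k^2\ge M$ for any $M$, applies Fatou to $M\int f(Q_k)$, concludes $\int f(Q_\infty^\star)\le M^{-1}\sup_k D_{\mu_k}(Q_0)$, and lets $M\to\infty$. Apart from this, the argument is the standard direct-method/Fatou package, and the main structural observation making it work is simply the monotonicity of $\mu\mapsto D_\mu$ together with the uniform coercivity \eqref{eq:coercpot} (which is $\lambda$-independent once $g$ is kept).
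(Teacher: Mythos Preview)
Your proof is correct and follows essentially the same approach as the paper's: a minimizer of $F_\lambda$ serves as a competitor for the $\limsup$ bound, and weak $H^1$-compactness together with lower semicontinuity and Fatou's lemma yields the $\liminf$ bound. Your monotonicity observation and your unified handling of the case $D_\infty(Q_0)=\infty$ (which the paper isolates as a separate preliminary case) are minor organizational differences only.
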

\begin{proof} The arguments are standard, we only sketch them here.

In the case $\lambda=\infty$ and $f(Q_0)> 0$ we have $D_\infty(Q_0)=\infty$. Then we also have $D_\mu(Q_0)\to\infty$ as $\mu\to\infty$. Otherwise there would exist a sequence $\mu_k\to\infty$ and maps $Q^k$ with $Q^k(1)=Q_0$ such that $F_{\mu_k}(Q^k)\leq C$, and therefore up to a subsequence $Q^k$ converges weakly in $H^1((1,\infty);\mathcal S_0)$ to a map $Q^*$ with $Q^*(1)=Q_0$.  However, the bound $\mu_k^2\int f(Q^k)\leq C$ implies that $f(Q^*)=0$ a.e., contradicting $f(Q_0)>0$.

Hence we may assume that $D_\lambda(Q_0)<\infty$, and pick a minimizer $Q^\lambda$ of $F_\lambda$ with $Q^\lambda(1)=Q_0$. Fix a sequence $\mu_k\to\lambda$ and minimizers $Q^k$ of $F_{\mu_k}$ with $Q^k(1)=Q_0$. Then we have the bound
\begin{equation*}
\limsup_{k\to\infty} F_{\mu_k}(Q^k)\leq \limsup_{k\to\infty} F_{\mu_k}(Q^\lambda) =F_\lambda(Q^\lambda)=D_\lambda(Q_0),
\end{equation*}
and therefore, up to a subsequence, $Q^k$ converges weakly in $H^1((1,\infty);\mathcal S_0)$ towards a map $Q^*$. The weak lower semi-continuity of $\int\abs{dQ/dr}^2$ and Fatou's lemma then imply
\begin{equation*}
F_\lambda(Q^*)\leq\liminf_{k\to\infty} F_{\mu_k}(Q^k),
\end{equation*}
so that combining the above we have
\begin{equation*}
D_\lambda(Q_0)\leq F_\lambda(Q^*)
\leq\liminf_{k\to\infty} F_{\mu_k}(Q^k) 
\leq \limsup_{k\to\infty} F_{\mu_k}(Q^k)
\leq D_\lambda(Q_0),
\end{equation*}
and deduce $\lim F_{\mu_k}(Q^k)=\lim D_{\mu_k}(Q_0)=D_\lambda(Q_0)$.
\end{proof}

\section{Upper bound}
\label{s:up}

\subsection{The case $\lambda\in (0,\infty)$}
\label{ss:upI}

In this section we assume that
\begin{equation*}
\frac\eta\xi \longrightarrow \lambda\in (0,\infty)\qquad\text{as }\xi\to 0,
\end{equation*}
and show that
\begin{equation*}
\min_{\mathcal H} E_\xi \leq \frac{1}{\eta}\int_{\mathbb S^2}D_\lambda(Q_b(\omega))\, d\mathcal H^2(\omega) + o\left(\frac 1\eta\right),
\end{equation*}
as $\xi\to 0$. This is obtained by constructing an admissible comparison map. This comparison map depends on two parameters $\e,h>0$, the use of which will become clear in the course of the proof.

\begin{proposition}
\label{p:upI}
For any $\varepsilon,h,\xi>0$ there exists a map $Q_\xi^{h,\e}$ such that
\begin{equation*}
\limsup_{\xi\to 0} \eta  E_\xi(Q^{h,\e}_\xi) \leq  \int_{\mathbb S^2}D_\lambda(Q_b(\omega))\, d\mathcal H^2(\omega) + \sigma(h,\e),
\end{equation*}
where $\lim_{h\to 0}(\lim_{\e\to 0}\sigma(h,\e)) =0$.
\end{proposition}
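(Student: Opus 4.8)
The plan is to build the comparison map $Q_\xi^{h,\e}$ by the interpolation strategy sketched in the introduction: first discretize the sphere, then on each cell of the discretization use a rescaled near-optimal profile for $F_\lambda$, and finally glue the cells together at a cost that vanishes in the limit. Concretely, I would fix $\e>0$ and choose a finite partition $\{U_i\}_{i=1}^{N(\e)}$ of $\Stwo$ into measurable sets of diameter $\le\e$, pick a base point $\omega_i\in U_i$, and let $Q^{(i)}\in Q_\infty+H^1((1,\infty);\mathcal S_0)$ be a map with $Q^{(i)}(1)=Q_b(\omega_i)$ and
$F_\lambda(Q^{(i)})\le D_\lambda(Q_b(\omega_i))+\e$. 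Since $F_\lambda(Q^{(i)})<\infty$, the profile $Q^{(i)}(r)$ converges to $Q_\infty$ as $r\to\infty$; using the coercivity \eqref{eq:coercpot} one shows the convergence is exponential, so by truncating at radius $1+h/\eta$ (i.e. physical radius $1+h$) and connecting to the constant $Q_\infty$ on $[1+h,\infty)$ one changes $F_\lambda$ by at most a quantity tending to $0$ as $h\to\infty$; call the truncated rescaled map $\widehat Q^{(i)}_\xi(r)=Q^{(i)}\big(1+\tfrac{r-1}{\eta}\big)$ for $r\in(1,1+h)$ and $\equiv Q_\infty$ beyond. This is where the parameter $h$ enters: it makes each radial profile compactly supported in a shell of width $h$, which is what allows the angular gluing to be done in a thin region.

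Next I would handle the angular matching. On the cone $\mathcal C(U_i)$ the naive map $x=r\omega\mapsto \widehat Q^{(i)}_\xi(r)$ is constant in $\omega$, hence has no angular gradient inside the cone, but it is discontinuous across the interfaces $\partial U_i$ and it fails the boundary condition \eqref{eq:Qb} since $\widehat Q^{(i)}_\xi(1)=Q_b(\omega_i)\ne Q_b(\omega)$ for $\omega\ne\omega_i$. I would fix both defects in a boundary layer. Near $\partial\Omega$, say for $r\in(1,1+\delta)$, interpolate linearly (in a suitable chart, or via the nearest-point projection onto the smooth manifold $\{n\otimes n-I/3\}$ composed with a geodesic homotopy) between the actual boundary value $Q_b(\omega)$ at $r=1$ and $\widehat Q^{(i)}_\xi(1+\delta)=Q^{(i)}(1+\delta/\eta)$; since $\|Q_b(\omega)-Q_b(\omega_i)\|\lesssim\e$ on $U_i$ this correction costs, per unit solid angle, $O(\e^2/\delta)+O(\delta)$ times a bounded factor, and one takes $\delta\to 0$ (depending on $\xi$, e.g. $\delta=\e$ or $\delta\to0$ slowly) so that after integrating over $\Stwo$ the extra contribution is $O(\e)$. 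Across the interfaces between adjacent cells $U_i,U_j$, thicken $\partial U_i$ into angular strips $S_{ij}$ of (angular) width $\rho$ and interpolate between $\widehat Q^{(i)}_\xi$ and $\widehat Q^{(j)}_\xi$ there; both maps are supported in $r\in(1,1+h)$, differ by $O(\e)$ pointwise (their boundary values $Q_b(\omega_i),Q_b(\omega_j)$ are $O(\e)$-close and $F_\lambda$-minimizing profiles depend continuously on the endpoint, by Lemma~\ref{l:Dlambda} and a standard compactness argument), so the interpolation gradient in the angular direction is $O(\e/\rho)$ over a region of measure $O(\rho\,h)$, giving total extra energy $O(\e^2 h/\rho)$; radial gradients are unchanged up to $O(\e)$, and the potential terms only improve. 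Choosing $\rho$ appropriately (e.g. $\rho=\sqrt\e$) makes this $O(\e^{3/2}h)\to0$. Collecting everything,
\begin{equation*}
\eta E_\xi(Q_\xi^{h,\e})\le \sum_i |U_i|\big(D_\lambda(Q_b(\omega_i))+\e\big)+o_h(1)+O(\e)+o_\xi(1),
\end{equation*}
and since $\omega\mapsto D_\lambda(Q_b(\omega))$ is continuous (again via Lemma~\ref{l:Dlambda} type arguments) the Riemann sum $\sum_i|U_i|D_\lambda(Q_b(\omega_i))$ converges to $\int_{\Stwo}D_\lambda(Q_b(\omega))\,d\mathcal H^2(\omega)$ as $\e\to0$. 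Taking $\limsup_{\xi\to0}$, then $\e\to0$, then $h\to\infty$ (or, to match the stated $h\to 0$, reparametrize $h\mapsto 1/h$) yields the claim with $\sigma(h,\e)=O(\e)+o_h(1)$.

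The main obstacle is the angular interpolation across cell interfaces while keeping the radial profiles compactly supported: one must verify that $F_\lambda$-almost-minimizers can be chosen with a uniform exponential decay rate (so the truncation error is controlled uniformly in $\omega_i$ by a single function of $h$) and that they depend on the endpoint $Q_b(\omega_i)$ in a uniformly continuous way on the compact set $Q_b(\Stwo)$ — this continuity does not require any regularity of the (possibly non-unique) minimizers themselves, only of the energy values, which is exactly why the proof yields merely an $o(1/\eta)$ remainder rather than a sharp rate. A secondary technical point is to carry out the homotopies in a way that stays in $\mathcal S_0$ and respects the $H^1$ bounds; working in a tubular neighborhood of $\{n\otimes n-I/3\}\cup\{Q_\infty\}$ and using the smooth nearest-point retraction handles this, since all maps involved take values in a fixed small neighborhood of that set once $\e$ is small.
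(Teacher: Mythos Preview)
Your overall strategy --- discretize, plant near-optimal radial profiles on each cell, glue, and invoke Riemann integrability of $\omega\mapsto D_\lambda(Q_b(\omega))$ --- matches the paper's. But there is a genuine gap, and you miss the simplification that makes the paper's argument go through.

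The gap is the claim that adjacent profiles $\widehat Q^{(i)}$, $\widehat Q^{(j)}$ differ by $O(\e)$ pointwise because ``$F_\lambda$-minimizing profiles depend continuously on the endpoint''. Lemma~\ref{l:Dlambda} concerns continuity in $\lambda$, not in the boundary datum, and there is no reason for a continuous selection of minimizers (they need not be unique). You concede this at the end (``only of the energy values''), but your interface-cost estimate $O(\e^2 h/\rho)$ genuinely uses the pointwise closeness; with jump $\Delta Q=O(1)$ the strip contribution becomes $O(hL/\rho)$ with total interface length $L\sim 1/\e$, and this does not vanish for any $\rho\le\e$. The paper sidesteps the whole issue in two moves. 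First, axial symmetry: one writes $Q_\xi^{h,\e}={}^tR_\varphi\,\widetilde Q^{h,\e}\big(1+(r-1)/\eta,\theta\big)\,R_\varphi$ and discretizes only the interval $[0,\pi]$, since $D_\lambda(Q_b(\theta,\varphi))$ is independent of $\varphi$. Second --- and this is the point you are missing --- after the rescaling $r=1+\eta(\tilde r-1)$ the angular-gradient term in $\eta E_\xi$ carries an extra factor $\eta^2$, so for any map that is smooth in $\theta$ with $\widetilde Q-Q_\infty$ compactly supported in $\tilde r$ it contributes $O(\eta)\to 0$ as $\xi\to 0$, \emph{regardless of how far apart the profiles at different mesh points are}. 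Concretely, the paper picks finitely many near-minimizers $\widetilde Q_i^h$ with $\widetilde Q_i^h-Q_\infty\in C_c^\infty([1,\infty))$ (compact support in the \emph{rescaled} variable, so no separate truncation radius is needed; here $h$ is the mesh size in $\theta$, not a radial cutoff), sets $\widetilde Q^h(\tilde r,\theta)$ piecewise constant in $\theta$, and then simply mollifies: $\widetilde Q^{h,\e}=\widetilde Q^h*_\theta\varphi_\e$. Smoothness in $\theta$ comes for free, and dominated convergence yields $\int_0^\pi F_\lambda(\widetilde Q^{h,\e}(\cdot,\theta))\sin\theta\,d\theta\to\int_0^\pi F_\lambda(\widetilde Q^{h}(\cdot,\theta))\sin\theta\,d\theta$ as $\e\to 0$, with no comparison of profiles required. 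The only continuity actually used is that of $Q_0\mapsto D_\lambda(Q_0)$ (Lemma~\ref{l:Dlip}), to pass from the Riemann sum to the integral.
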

\begin{proof}
We construct an axially symmetric  map $Q_\xi^{h,\e}$ of the form
\begin{equation*}
Q_\xi^{h,\e}(r,\theta,\varphi)
= {}^tR_\varphi \widetilde Q^{h,\e}\left(1+\frac{r-1}{\eta},\theta\right) R_\varphi,
\end{equation*}
where $\widetilde Q^{h,\e}(\tilde r, \theta)$ is a smooth map to be determined later, and $R_\varphi$ is the rotation of angle $\varphi$ and axis $\mathbf e_3$.
Dropping the  exponents $h,\e$ (as we will do when there is no confusion) it holds
\begin{align*}
\abs{\nabla Q_\xi}^2 &=
\frac{1}{\eta^2}\abs{\frac{\partial\widetilde Q}{\partial\tilde r}}^2 + \frac{1}{r^2}\abs{\frac{\partial\widetilde Q}{\partial\theta}}^2+\frac{1}{r^2\sin^2\theta}\Xi[\widetilde Q],\\
\text{where }\Xi[\widetilde Q]&= \abs{ \partial_\varphi[ {}^t R_\varphi \widetilde Q R_\varphi]}^2.
\end{align*}
The function $\Xi$ is a nonnegative quadratic form with bounded coefficients depending smoothly on $\varphi$. Since $Q_\infty$ commutes with $R_\varphi$, $\Xi[\widetilde Q]$ vanishes at $\widetilde Q=Q_\infty$  and  satisfies
\begin{equation*}
\Xi[\widetilde Q]\leq C \abs{\widetilde Q-Q_\infty}^2,
\end{equation*}
for some absolute constant $C>0$.
Integrating in $\Omega$ and changing variables according to $r-1=\eta (\tilde r-1)$ we find
\begin{align*}
\eta \int_\Omega\abs{\nabla Q_\xi}^2 &
 \leq 2\pi \int_0^\pi \int_1^\infty \abs{\frac{\partial \widetilde Q}{\partial \tilde r}}^2 d\tilde r\, \sin\theta d\theta +2\pi \eta\, \mathcal R_\xi(\widetilde Q),\\
\mathcal R_\xi(\widetilde Q)& =  \int_0^\pi \int_1^\infty (2(\tilde r -1) + \eta (\tilde r-1)^2)\abs{\frac{\partial \widetilde Q}{\partial \tilde r}}^2\sin\theta d\theta \\
&\quad + \eta\int_0^\pi \int_1^\infty \abs{\frac{\partial \widetilde Q}{\partial \theta}}^2 d\tilde r\, \sin\theta d\theta + \eta\int_0^\pi \int_1^\infty \frac{C}{\sin\theta}\abs{\widetilde Q-Q_\infty}^2 d\tilde r\, d\theta.
\end{align*}
For any fixed $h,\e>0$ we will have $\sup_\xi \mathcal R_\xi(\widetilde Q^{h,\e})<\infty$ provided 
\begin{equation}\label{eq:tildeQcompact}
\widetilde Q^{h,\e}-Q_\infty\equiv 0\text{ outside a compact, and near }\theta=0\text{ and }\theta=\pi.
\end{equation}
Moreover we have
\begin{align*}
\eta\int_\Omega &\left(\frac{1}{\xi^2}f(Q_\xi)+\frac{1}{\eta^2}g(Q_\xi)\right) \\
& =
2\pi \int_0^\pi \int_1^\infty\left( \lambda^2 f(\widetilde Q)+g(\widetilde Q) \right) d\tilde r\, \sin\theta d\theta \\
&\quad +2\pi \eta \, \int_0^\pi \int_1^\infty
(2(\tilde r -1)+\eta(\tilde r-1)^2))\left(\frac{\eta^2}{\xi^2}f(\widetilde Q)+g(\widetilde Q)\right)
d\tilde r\, \sin\theta d\theta\\
&\quad + 2\pi\left(\frac{\eta^2}{\xi^2}-\lambda^2\right)\int_0^\pi \int_1^\infty f(\widetilde Q)\, d\tilde r\,\sin\theta d\theta.
\end{align*}
Since $f(Q_\infty)=g(Q_\infty)=0$, if \eqref{eq:tildeQcompact} is satisfied for all $h,\e>0$, we deduce, gathering the above,
\begin{align}\label{eq:EQ}
\limsup_{\xi\to 0} \eta E_\xi(Q^{h,\e}_\xi)&\leq 2\pi \int_0^\pi F_\lambda\left(\widetilde Q^{h,\e}(\cdot,\theta)\right) \sin\theta d\theta,
\end{align}
where $F_\lambda$ was defined in \eqref{Flambda}.
Recall that
\begin{equation*}
D_\lambda(Q_0)=\inf\left\lbrace F_\lambda(\widetilde Q)\colon \widetilde Q(1)=Q_0\right\rbrace.
\end{equation*}
The functional $F_\lambda$ is invariant under pointwise conjugation by $R_\varphi$ for any angle $\varphi\in\R$, and therefore
\begin{equation*}
D_\lambda({}^t R_\varphi Q_0 R_\varphi)=D_\lambda(Q_0).
\end{equation*}
Since $Q_b(\omega)=Q_b(\theta,\varphi)$ is axially symmetric, in other words
\begin{equation*}
Q_b(\theta,\varphi)={}^t R_\varphi Q_b(\theta,0)R_\varphi,
\end{equation*}
we deduce that $D_\lambda(Q_b(\theta,\varphi))$ does not depend on the azimuthal angle $\varphi$, and
\begin{equation*}
\int_{\mathbb S^2} D_\lambda(Q_b(\omega)) d\mathcal H^2(\omega) = 2\pi\int_0^\pi D_\lambda(Q_b(\theta,0))\, \sin\theta d\theta.
\end{equation*}
Combining this with \eqref{eq:EQ}, in order to prove Proposition~\ref{p:upI} it suffices to construct for all $h,\e>0$ a smooth map $\widetilde Q^{h,\e}(\tilde r,\theta)$ which satisfies \eqref{eq:tildeQcompact} and
\begin{equation}\label{eq:upQtilde}
\limsup_{h\to 0}\left[\limsup_{\e\to 0}\int_0^\pi F_\lambda\left(\widetilde Q^{h,\e}(\cdot,\theta)\right) \sin\theta d\theta \right]\leq \int_0^\pi D_\lambda(Q_b(\theta,0)) \sin\theta d\theta.
\end{equation}
In principle one would like to choose $\widetilde Q(\cdot,\theta)$ minimizing $F_\lambda$ with respect to the boundary condition $\widetilde Q(1,\theta)=Q_b(\theta,0)$. But it is not obvious that such a map $\widetilde Q$ would be (even weakly) differentiable in  $\theta$. However we can make use of the continuity of $\theta\mapsto D_\lambda(Q_b(\theta,0))$ to bypass this issue, at the price of introducing the extra parameters $h,\e>0$. 

Thanks to Lemma~\ref{l:Dlip} below, the function $\theta\mapsto  D_\lambda(Q_b(\theta,0))\sin\theta$ is continuous on $[0,\pi]$. In particular it is Riemann integrable, and there exists a family of partitions
\begin{equation*}
0=\theta_1^h<\theta_2^h<\cdots<\theta_{I_h}^h =\pi,\quad\sup_i\abs{\theta^h_{i+1}-\theta^h_i}\leq h,
\end{equation*}
such that
\begin{equation*}
\lim_{h\to 0} \sum_{i}(\theta^h_{i+1}-\theta^h_i)D_\lambda(Q_b(\theta_i^h,0))\sin\theta_i^h = \int_0^\pi D_\lambda(Q_b(\theta,0))\sin\theta\, d\theta.
\end{equation*}
For any $i\in \lbrace 1,\ldots, I_h-1\rbrace$ there exists a map $\widetilde Q_i^h(\tilde r)$ such that $\widetilde Q_i^h-Q_\infty\in C_c^\infty([1,\infty))$ and
\begin{equation*}
F_\lambda(\widetilde Q_i^h)\leq D_\lambda(Q_b(\theta_i^h,0)) + h.
\end{equation*} 
Then, defining
\begin{equation*}
\widetilde Q^h(r,\theta)=\begin{cases}
Q_\infty &\text{ if }\theta\in [0,\theta_2^h)\cup [\theta_{I_h-1}^h,\pi),\\
\widetilde Q_i^h(r)&\text{ if }\theta\in [\theta_i^h,\theta_{i+1}^h),\; 2\leq i\leq I_h-2,
\end{cases}
\end{equation*}
we obtain
\begin{equation}\label{eq:upQh}
\limsup_{h\to 0} \int_0^{\pi} F_\lambda(\widetilde Q^{h}(\cdot,\theta))\sin\theta \, d\theta \leq \int_0^\pi D_\lambda(Q_b(\theta,0))\sin\theta\, d\theta.
\end{equation}
Eventually we define $\widetilde Q^{h,\e}$ by smoothing $\widetilde Q^h$ in $\theta$, i.e.
\begin{equation*}
\widetilde Q^{h,\e}=Q^h *_\theta\varphi_\e,
\end{equation*}
for some smooth kernel $\varphi_\e(\theta)=\e^{-1}\varphi(\theta/\e)$. Such map $\widetilde Q^{h,\e}$  satisfies \eqref{eq:tildeQcompact}, and 
$$\widetilde Q^{h,\e}\longrightarrow \widetilde Q^{h},\qquad {\partial \widetilde Q^{h,\e}\over \partial r}\longrightarrow {\partial \widetilde Q^{h}\over \partial r}\quad a.e.$$
By dominated convergence we thus have
\begin{equation*}
\lim_{\e\to 0} \int_0^{\pi} F_\lambda(\widetilde Q^{h,\e}(\cdot,\theta))\sin\theta \, d\theta = \int_0^{\pi} F_\lambda(\widetilde Q^{h}(\cdot,\theta))\sin\theta \, d\theta.
\end{equation*}
Combining this with \eqref{eq:upQh} we obtain \eqref{eq:upQtilde}, thus completing the proof.
\end{proof}

\begin{lemma}\label{l:Dlip}
The map $Q_0\mapsto D_\lambda(Q_0)$ is locally Lipschitz.
\end{lemma}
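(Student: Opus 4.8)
The plan is to prove that $Q_0 \mapsto D_\lambda(Q_0)$ is locally Lipschitz by exhibiting, for any two nearby boundary values $Q_0$ and $Q_0'$, a competitor for $D_\lambda(Q_0')$ built from a near-optimal profile for $D_\lambda(Q_0)$, modified on a short interval near $r=1$ so as to reconnect the initial value $Q_0$ to $Q_0'$. By symmetry in $Q_0 \leftrightarrow Q_0'$ this will give $|D_\lambda(Q_0) - D_\lambda(Q_0')| \le C|Q_0 - Q_0'|$ on compact sets.

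Concretely, fix a compact set $K \subset \mathcal S_0$ and let $Q_0, Q_0' \in K$ with $\delta := |Q_0 - Q_0'|$ small. First I would pick a competitor $Q$ for $D_\lambda(Q_0)$ with $F_\lambda(Q) \le D_\lambda(Q_0) + \delta$ and with $Q - Q_\infty$ compactly supported and $Q$ Lipschitz in $r$ (this is legitimate since, as noted after \eqref{eq:Dlambda}, the infimum is attained and smooth/compactly supported maps are dense in energy; one may even take the minimizer, which is smooth by standard elliptic regularity). A key point is that such a near-minimizer can be taken with a uniform Lipschitz bound $\|dQ/dr\|_\infty \le M(K)$ on, say, the interval $[1,2]$: indeed one can first cut off and regularize, and since $D_\lambda$ is bounded on $K$ (compare with the affine map interpolating $Q_0$ to $Q_\infty$ on $[1,2]$), the near-optimal profiles live in a bounded subset of $H^1$, and a short-interval reparametrization lets us assume the profile is essentially affine, equal to $Q_0$ at $r=1$, on a small interval $[1, 1+\delta^{1/2}]$ without increasing the energy by more than $O(\delta^{1/2})$. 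Then I define the competitor $\widehat Q$ for $D_\lambda(Q_0')$ by
\begin{equation*}
\widehat Q(r) = \begin{cases} Q_0' + \frac{r-1}{\delta^{1/2}}\bigl(Q(1+\delta^{1/2}) - Q_0'\bigr), & 1 \le r \le 1+\delta^{1/2},\\[2pt] Q(r), & r \ge 1 + \delta^{1/2}.\end{cases}
\end{equation*}

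It remains to estimate $F_\lambda(\widehat Q) - F_\lambda(Q)$, which comes entirely from the interval $[1, 1+\delta^{1/2}]$. The Dirichlet term there is $\tfrac12 \delta^{-1}|Q(1+\delta^{1/2}) - Q_0'|^2 \cdot \delta^{1/2}$; since $|Q(1+\delta^{1/2}) - Q_0| \le M\delta^{1/2}$ and $|Q_0 - Q_0'| = \delta$, we get $|Q(1+\delta^{1/2}) - Q_0'| \le M\delta^{1/2} + \delta = O(\delta^{1/2})$, so the Dirichlet contribution is $O(\delta^{1/2})$ — wait, this only gives a H\"older bound. To get the genuine Lipschitz bound one should instead reconnect on an interval of length $O(\delta)$: on $[1, 1+\delta]$ replace $Q$ by the affine interpolation between $Q_0'$ and $Q(1+\delta)$; then $|Q(1+\delta) - Q_0'| \le M\delta + \delta = O(\delta)$, the new Dirichlet term is $O(\delta^{-1}) \cdot O(\delta^2) \cdot$ wait that is $O(\delta)$ — good — but one must also subtract the old Dirichlet energy of $Q$ on $[1,1+\delta]$, which is nonnegative, and the potential terms $\lambda^2 f(\widehat Q) + g(\widehat Q)$ on $[1,1+\delta]$ are $O(\delta)$ since $f,g$ are bounded on the bounded set where $\widehat Q$ ranges. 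Hence $F_\lambda(\widehat Q) \le F_\lambda(Q) + C(K)\delta \le D_\lambda(Q_0) + (C+1)\delta$, giving $D_\lambda(Q_0') \le D_\lambda(Q_0) + C'\delta$, and symmetrically the reverse.

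The main obstacle is the uniform Lipschitz (or at least $H^1$-modulus) control of the near-optimal profiles near $r=1$: one needs that $|Q(1+\delta) - Q_0| \le C\delta$ for the chosen competitor, uniformly in $Q_0 \in K$. This is where I would either invoke elliptic regularity for the Euler--Lagrange equation of $F_\lambda$ (the minimizer is smooth up to $r=1$ with a derivative bound depending only on $K$, since $Q_b(\theta,0) \in K$ is compact and the potentials are smooth), or, more elementarily, reparametrize time near $r=1$: given any finite-energy competitor, composing with a bi-Lipschitz change of variable on $[1,2]$ that is very slow near $1$ produces a competitor whose derivative at $r=1$ is as small as we like at negligible energy cost, after which the affine reconnection argument above applies with the small length scale $\delta$. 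Handling the $\lambda = \infty$ case requires the extra observation that the affine reconnection between two points $Q_0, Q_0'$ on the manifold $\{n\otimes n - I/3\}$ need not stay on that manifold, so one should instead reconnect along a short geodesic of $\mathbb S^2$ lifted to the manifold, whose length is $O(|Q_0 - Q_0'|)$ and on which $f \equiv 0$; the same $O(\delta)$ estimate then goes through.
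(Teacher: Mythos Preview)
Your overall strategy---build a competitor for $D_\lambda(Q_0')$ from a (near-)minimizer for $D_\lambda(Q_0)$ by reconnecting the boundary value on a short interval near $r=1$---is exactly the right one, and it is what the paper does. However, you make the argument harder than necessary by reconnecting to the value $Q(1+\delta)$ of the profile at the far end of the short interval. This forces you to control $|Q(1+\delta)-Q_0|$, i.e.\ to obtain a \emph{uniform} Lipschitz bound on near-minimizers near $r=1$, which you correctly flag as the main obstacle. Your proposed fixes (elliptic regularity, reparametrization) are plausible but not carried out; in particular, the $H^1$ bound coming from the energy alone only yields $|Q(1+\delta)-Q_0|=O(\sqrt\delta)$, which gives a H\"older rather than Lipschitz estimate, and your reparametrization sketch does not obviously repair this.

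The paper sidesteps the issue with a small but decisive change: instead of truncating the profile on $[1,1+\delta]$ and leaving it unchanged on $[1+\delta,\infty)$, one \emph{shifts} the entire minimizing profile to the right by $\delta$. That is, given a minimizer $\widetilde Q^1$ with $\widetilde Q^1(1)=Q_0^1$, set
\[
\widetilde Q^2(r)=
\begin{cases}
Q_0^2 + \dfrac{r-1}{\delta}\bigl(Q_0^1-Q_0^2\bigr), & 1<r<1+\delta,\\[4pt]
\widetilde Q^1(r-\delta), & r>1+\delta.
\end{cases}
\]
By translation invariance of the one-dimensional integral, the shifted tail contributes exactly $F_\lambda(\widetilde Q^1)$, so the entire difference $F_\lambda(\widetilde Q^2)-F_\lambda(\widetilde Q^1)$ comes from the affine bridge joining $Q_0^2$ to $Q_0^1$ (not to $Q(1+\delta)$). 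Its Dirichlet cost is exactly $\tfrac{1}{2\delta}|Q_0^1-Q_0^2|^2$ and its potential cost is at most $\tfrac{C}{2}\delta$ with $C=\sup_{|Q|\le M}(\lambda^2 f+g)$. Choosing $\delta\sim|Q_0^1-Q_0^2|$ gives the Lipschitz bound directly, with no regularity input whatsoever.

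For the record, the uniform derivative bound you wanted \emph{does} hold for the actual minimizer when $\lambda<\infty$: the Euler--Lagrange ODE has conserved quantity $\tfrac12|Q'|^2-\lambda^2 f(Q)-g(Q)$, which vanishes at infinity, so $|Q'(1)|^2=2\bigl(\lambda^2 f(Q_0)+g(Q_0)\bigr)$ is bounded on compact sets. But with the shift trick this observation is unnecessary. Note also that the lemma lives in the section treating $\lambda\in(0,\infty)$ and is only used there, so your separate discussion of $\lambda=\infty$ (geodesic reconnection on the uniaxial manifold) is extra.
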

\begin{proof}
Let $Q_0^1,Q_0^2\in\mathcal S_0$ be such that $\abs{Q_0^1},\abs{Q_0^2}\leq M$. Let $\widetilde Q^1$ be such that
\begin{equation*}
D_\lambda(Q_0^1)=F_\lambda(\widetilde Q^1),\qquad \widetilde Q^1(1)=Q_0^1.
\end{equation*}
Let $\delta>0$ and define
\begin{equation*}
\widetilde Q^2(r)=\begin{cases}
Q_0^2+\frac{ r-1}{\delta}(Q_0^1-Q_0^2) &\text{ for }1<r<1+\delta,\\
\widetilde Q^1(r-\delta) &\text{ for }r>1+\delta.
\end{cases}
\end{equation*}
Then 
\begin{align*}
D_\lambda(Q_0^2)-D(Q_0^1)&\leq F_\lambda(\widetilde Q^2) -F_\lambda(\widetilde Q^1)\\
& =\frac 12 \int_1^{1+\delta}
\left[\frac{\abs{Q_0^1-Q_0^2}^2}{\delta^2}+\lambda^2 f(\widetilde Q^2)+ g(\widetilde Q^2)\right] dr \\
&\leq \frac{\abs{Q_0^1-Q_0^2}^2}{2\delta}+ \frac C 2 \delta,
\end{align*}
for $C=\sup_{\abs{Q}\leq M}(\lambda^2 f+g)$. Choosing $\delta=C^{-1/2}\abs{Q_0^1-Q_0^2}$ yields
\begin{equation*}
D_\lambda(Q_0^2)-D(Q_0^1) \leq \frac { C^{1/2}}{2}\abs{Q_0^1-Q_0^2},
\end{equation*}
thus proving the local Lipschitz continuity of $D_\lambda$.
\end{proof}

\subsection{The case $\lambda=\infty$}
\label{ss:upII}

We next consider a compementary regime to the one considered above, with $\eta=\eta(\xi)$ such that
\begin{equation*}
\e:=\frac\xi\eta \longrightarrow 0\quad\text{and}\quad
\eta|\ln\xi|\longrightarrow 0\qquad\text{as }\xi\to 0,
\end{equation*}
that is, the characteristic length scale determined by the field is much larger than the length scale determined by elastic response in the nematic.
Again, we derive an upper bound on the energy by constructing an appropriate test map, whose structure suggests the anticipated form of the minimizers.  We show:

\begin{proposition}
\label{p:upII}
There exists a map $Q_\xi$ such that
\begin{equation*}
  E(Q_\xi) \leq  {1\over \eta} \int_{\mathbb S^2}D_\infty(Q_b(\omega))\, d\mathcal H^2(\omega) + \frac 23 \pi^2 |\ln \e| + O(1) .
\end{equation*}
\end{proposition}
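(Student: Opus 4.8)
The plan is to build an axially symmetric test map of the form $Q_\xi = n_\xi\otimes n_\xi - \frac13 I$ away from a thin neighborhood of the equatorial plane, where $n_\xi$ is obtained by rescaling in the radial variable a family of unit-vector profiles that realize the geodesic distances $D_\infty(Q_b(\theta,\varphi)) = \kappa(1-|\cos\theta|)$ from Lemma~\ref{L:geod}. Concretely, for each polar angle $\theta$ one has a minimizing geodesic $n_\theta(\tilde r)\in\Stwo$ (for the degenerate Sternberg metric associated with $F_\infty$) joining $\mathbf e_r(\theta)$ at $\tilde r = 1$ to $\mathbf e_3$ as $\tilde r\to\infty$, and one sets $n_\xi(r,\theta,\varphi) = n_\theta(1 + (r-1)/\eta)$ suitably rotated by $R_\varphi$. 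First I would record that, because the director must equal $\mathbf e_r$ on the sphere, the hemisphere $\{x_3>0\}$ wants $n$ to connect $\mathbf e_r$ to $+\mathbf e_3$ while $\{x_3<0\}$ wants $\mathbf e_r$ connected to $+\mathbf e_3$ as well (since $Q_\infty$ only sees $\pm\mathbf e_3$), and the resulting director field, while continuous in $\theta$ on each hemisphere, has a jump across $\{\theta=\pi/2\}$: near the equator the tangential-to-sphere component of $n$ on $\partial\Omega$ cannot be continuously matched to a field that escapes to $\mathbf e_3$ on both sides. This is the topological obstruction flagged in the introduction, and it is why a pure rescaled-geodesic ansatz fails.

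The second step is to insert a Saturn-ring defect core to repair this mismatch. I would excise a solid torus of minor radius $\sim\xi$ encircling the sphere in the plane $\{x_3=0\}$ at $|x|\approx 1$, and inside it place a standard two-dimensional (degree $1/2$, or the $Q$-tensor "escape into biaxiality") core profile of the Landau-de Gennes functional, scaled to width $\xi$; this is exactly the structure analyzed in \cite{colloid}. The radial transition region (width $\eta$) and the defect core (width $\xi$) are separated because $\xi\ll\eta$, so one glues: for $\theta$ away from $\pi/2$ use the rescaled geodesic $n_\theta$; for $\theta$ within an annular transition band use a logarithmically-cut-off interpolation in the plane transverse to the ring connecting the core to the bulk director field, contributing the familiar $\tfrac12|\nabla Q|^2$ "vortex" energy $\sim \pi\kappa^2 |\ln(\eta/\xi)|$ per unit length of ring. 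Estimating the length of the ring ($2\pi$) and tracking the constant carefully should produce the announced $\frac23\pi^2|\ln\e|$ with $\e=\xi/\eta$, since $\kappa^2 = \sqrt{24}$ and $2\pi\cdot\frac{\pi}{2}\kappa^2\cdot$(appropriate anisotropy factor) must be arranged to equal $\frac23\pi^2$; I would double-check this numerology against the normalization of $f$ and the biaxial core energy from \cite{colloid}.

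The third step is the energy bookkeeping. The leading term: changing variables $r - 1 = \eta(\tilde r - 1)$ exactly as in the proof of Proposition~\ref{p:low} and Proposition~\ref{p:upI}, the radial-transition contribution yields $\frac1\eta\, 2\pi\int_0^\pi F_\infty(n_\theta)\sin\theta\, d\theta = \frac1\eta\int_{\Stwo} D_\infty(Q_b(\omega))\, d\mathcal H^2(\omega)$ up to lower-order terms, using that $f(n\otimes n - I/3)\equiv 0$ along the ansatz so the $\xi^{-2}f$ term costs nothing outside the core. The angular-derivative terms $r^{-2}|\partial_\theta Q|^2$ and the $R_\varphi$-conjugation term $r^{-2}\sin^{-2}\theta\,\Xi[Q]$ are $O(1)$ after integration provided the geodesic profiles depend on $\theta$ with controlled Lipschitz bounds and the cutoffs near $\theta = 0,\pi$ are handled as in \eqref{eq:tildeQcompact}; the magnetic term $g(Q)$ inside the $O(\eta)$-thick core neighborhood contributes $O(\eta^{-2}\cdot \eta\cdot\xi^2) = o(1)$. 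Collecting: leading term plus $\frac23\pi^2|\ln\e| + O(1)$.

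The main obstacle, I expect, is constructing the $\theta$-dependent family of geodesic profiles $n_\theta$ with enough regularity (at least $\theta\mapsto n_\theta(\cdot)$ Lipschitz into $H^1$) so that the angular-derivative energies are genuinely $O(1)$ and not larger — the Sternberg-geodesic viewpoint gives existence and the value $D_\infty$, but not automatically a smooth selection — together with making the gluing between the three regions (bulk geodesic / vortex transition annulus / $\xi$-core) energetically seamless to within $O(1)$, in particular getting the constant in front of $|\ln\e|$ exactly right rather than merely up to a multiplicative factor. A secondary technical point is verifying directly that the biaxial ring core from \cite{colloid} can be inserted here compatibly with the far-field $Q_\infty$ rather than the far-field used there; I would cite the relevant construction from \cite{colloid} and adapt the outer matching.
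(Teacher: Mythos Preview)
Your overall architecture matches the paper's: an axisymmetric ansatz built from rescaled heteroclinic profiles in the boundary layer, with a ring defect inserted near the equator to repair the topological mismatch. Two points where the paper's execution differs from and sharpens your sketch.

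First, the $\theta$-regularity issue you flag as the main obstacle does not arise. Lemma~\ref{L:geod} gives the minimizing heteroclinic $n(t,\theta)$ by an explicit closed formula \eqref{eq:hetero}, from which smoothness in $(t,\theta)$ and uniform exponential decay of $n-\mathbf e_3$, $\partial_t n$, and $\partial_\theta n$ are read off directly. The angular-derivative and $\Xi$ terms are then controlled exactly as you anticipate, with no selection or Lipschitz argument needed.

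Second, your ring-core coefficient is wrong: the quantity $\kappa=\sqrt[4]{24}$ governs the Sternberg geodesic distance and plays no role in the core energy, so the expression $\pi\kappa^2|\ln(\eta/\xi)|$ per unit length cannot produce $\tfrac23\pi^2|\ln\e|$. The paper does not excise a $\xi$-scale torus and import a core from \cite{colloid}. Instead it confines the defect to a single $\eta\times\eta$ box in the $(r,\theta)$ cross-section near $(r,\theta)=(1,\tfrac\pi2)$, rescales it to a unit square, and parametrizes the relevant $Q$-tensors by a complex scalar $u$ via \eqref{Qu}, following \cite{baumanparkphillips12}. Under this parametrization $\tfrac12|\nabla Q|^2=\tfrac16|\nabla u|^2$, the induced boundary data for $u$ has degree $-1$, and the construction reduces to a standard planar Ginzburg--Landau minimization with small parameter $\e=\xi/\eta$, costing $\tfrac\pi3|\ln\e|+O(1)$ in the cross-section; the factor $2\pi$ from the $\varphi$-integration then gives exactly $\tfrac23\pi^2|\ln\e|$. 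Your separate $\xi$-core plus logarithmic annulus picture could in principle be made to work, but getting the constant right that way requires the same reduction to a scalar Ginzburg--Landau problem, so nothing is gained by splitting the scales.
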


Before proving the proposition we require some further information about the minimizing geodesic of the problem $D_\infty$.  Recall that this minimization is taken over uniaxial tensors, and thus reduces to a problem for unit vector fields $n\in\Stwo$.  We note that for $Q=n\otimes n -\frac13 I$, the magnetic energy density is expressed as
$$  g(Q)= \sqrt \frac32(1-n_3^2)=: g(n),  $$
with a slight abuse of notation.
This is both a major simplification and a minor complication:  whereas the potential vanishes for exactly one uniaxial tensor $Q_\infty$, it vanishes for two antipodal directors $n=\pm \mathbf e_3$.  We denote by $\omega(\theta,\varphi)$ the point on $\Stwo$ with angular coordinates $(\theta,\varphi)\in [0,\pi]\times [0,2\pi)$.
Define
$$  F_\infty(n):= \int_0^\infty \left( |\dot n|^2 + g(n) \right) dt,   $$
with $n\in H^1_{loc}( [0,\infty); \Stwo)$ with $n(0)=\omega(\theta,\varphi)$.
Finiteness of the energy enforces the condition $n(t)\to\pm \mathbf e_3$ as $t\to\infty$, but the choice of terminal point will depend on the initial value $n(0)\in\Stwo$.
Let 
$$d_\infty^\pm(\omega):= \inf_{n(0)=\omega \atop n(\infty)=\pm \mathbf e_3} F_\infty (n).
$$
Then, since $n(0)=\omega$ is chosen such that $Q_b(\omega(\theta,\varphi))=n(0)\otimes n(0)-\frac13 I$, we have
$$  D_\infty( Q_b(\omega) ) = \min\left\{
d_\infty^+(\omega), d_\infty^-(\omega)
\right\}.  $$
By symmetry it is enough to consider the case where the target point is $+\mathbf e_3$.  We have the following characterization of the minimizers:

\begin{lemma}\label{L:geod} For any $\omega(\theta,\varphi)\in\Stwo$ there exists a minimizer $n=n(t,\theta,\varphi)$ of $d_\infty^+(\omega)$, with
$$  F_\infty(n) =  \kappa(1-\cos\theta), \qquad \kappa=\sqrt[4]{24}.  $$
  The minimizer is $C^1$ smooth and equivariant, that is $n(t,\theta,\varphi)=R_\varphi n(t,\theta,0)$ for all $\varphi$.  Moreover, we have 
\beqn\label{eq:decay}
|n(t,\theta,\varphi)-\mathbf e_3|, |\dot n(t,\theta,\varphi)|^2, 
\left| {\partial n\over\partial\theta} \right|^2 \le Ce^{-\kappa t}
\eeqn
for constant $C>0$, uniformly in $\theta,\varphi$.
\end{lemma}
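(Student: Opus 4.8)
\medskip
\noindent\emph{Sketch of proof.} The plan is to use Sternberg's observation \cite{St91} that minimizing heteroclinics of $F_\infty$ are geodesics for the degenerate conformal metric $2\sqrt{g(n)}\,g_{\Stwo}$ on $\Stwo$, and to exploit the rotational symmetry of $g$ to reduce the problem to a one-dimensional ODE along meridians.

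First I would prove the lower bound $d_\infty^+(\omega)\ge\kappa(1-\cos\theta)$. Starting from the pointwise inequality $|\dot n|^2+g(n)\ge 2\sqrt{g(n)}\,|\dot n|$ (with equality iff $|\dot n|=\sqrt{g(n)}$), using the identity $2\sqrt{g(n)}=\kappa\sqrt{1-n_3^2}$, and noting that on $\Stwo$ one has $|\dot n_3|\le\sqrt{1-n_3^2}\,|\dot n|$ (because $\dot n\perp n$ forces $\dot n_3=\langle\dot n,\mathbf e_3-n_3n\rangle$), we get, for any admissible $n$ with $n(0)=\omega(\theta,\varphi)$ and $n(\infty)=\mathbf e_3$,
\[
F_\infty(n)\ \ge\ \kappa\int_0^\infty|\dot n_3|\,dt\ \ge\ \kappa\,\bigl(n_3(\infty)-n_3(0)\bigr)=\kappa(1-\cos\theta).
\]
Equality throughout forces three pointwise conditions: $|\dot n|=\sqrt{g(n)}$, $n_3$ nondecreasing, and $\dot n\parallel(\mathbf e_3-n_3n)$ — that is, $n$ must sweep out a meridian, monotonically, toward the north pole.

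Next I would write the minimizer down. Letting $\psi\in[0,\pi]$ denote the colatitude of $n$, the three equality conditions say precisely that $n$ runs along the meridian through $\omega$ with $-\dot\psi=\tfrac{\kappa}{2}\sin\psi$ and $\psi(0)=\theta$; this integrates to $\tan(\psi(t)/2)=\tan(\theta/2)\,e^{-\kappa t/2}$, so $\psi(t)\to0$ and $n(t)\to\mathbf e_3$. Setting $n(t,\theta,\varphi)$ equal to the point of $\Stwo$ at colatitude $\psi(t)$ on the meridian of longitude $\varphi$ gives the equivariance $n(t,\theta,\varphi)=R_\varphi n(t,\theta,0)$ by construction, and a change of variables $t\mapsto\psi$ in $F_\infty$ returns the value $\kappa(1-\cos\theta)$; hence this $n$ attains $d_\infty^+(\omega)$ (and it is the unique minimizer, by the equality analysis). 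Real-analyticity of $(t,\theta)\mapsto\psi(t,\theta)$ on $[0,\infty)\times[0,\pi)$ yields the asserted $C^1$ (in fact $C^\infty$) regularity of $n$, and the exponential decay estimates \eqref{eq:decay} follow by differentiating the explicit formula in $\theta$ and using $\sin(\psi/2)\le\tan(\psi/2)=\tan(\theta/2)e^{-\kappa t/2}$.

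I expect the only real obstacle to be conceptual rather than computational: the conformal weight $2\sqrt{g(n)}$ degenerates (it vanishes at $n=\pm\mathbf e_3$), which is why the lower bound must be pushed through the monotone scalar $n_3$ rather than by invoking standard Riemannian geodesic theory. The same degeneracy makes $\theta=\pi$ exceptional — there the Bogomolny flow is stationary at $-\mathbf e_3$, so the value $d_\infty^+(-\mathbf e_3)=2\kappa$ is an infimum that is not attained — but this isolated point is harmless in the sequel, since $D_\infty(Q_b(-\mathbf e_3))=d_\infty^-(-\mathbf e_3)=0$ is realized by the constant path.
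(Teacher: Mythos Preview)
Your proof is correct and follows essentially the same route as the paper's: the Bogomolny/Sternberg inequality combined with the projection $|\dot n_3|\le\sqrt{1-n_3^2}\,|\dot n|$ (which is exactly the paper's Cauchy--Schwarz reduction to the meridian form $N=(\sqrt{1-n_3^2},0,n_3)$), followed by explicit integration of the resulting first-order ODE for the colatitude. Your packaging is slightly more streamlined---you chain the two inequalities into a single lower bound rather than first reducing to meridians and then applying the Modica--Mortola trick---and your remark that at $\theta=\pi$ the value $d_\infty^+(-\mathbf e_3)=2\kappa$ is an unattained infimum is a careful observation the paper glosses over.
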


\begin{proof}  The existence of a minimizer for each fixed $(\theta,\varphi)$ follows from \cite{St91}; the other statements are special to our case.  First, we note that for any rotation $R_\varphi$, $F_\infty(R_\varphi n)=F_\infty(n)$, and thus it is sufficient to consider the case $\varphi=0$.  We claim that given any admissible $n(t)=(n_1,n_2,n_3)$, the configuration 
$N(t)=(\sqrt{1-n_3^2},0,n_3)$ has energy $F_\infty(N)\le F_\infty(n)$.  Indeed, we calculate
$$   |\dot N|^2 = {\dot n_3^2\over 1- n_3^2}, \qquad 
    g(N)=g(n)=\sqrt{\frac32}(1-n_3^2),  $$
and
$$   |\dot n|^2 - |\dot N|^2 = 
    {(1-n_3^2)(\dot n_1^2 + \dot n_2^2)- [n_3\dot n_3]^2\over
                        1-n_3^2}=
                        {(n_1^2+n_2^2)(\dot n_1^2+\dot n_2^2)-(n_1\dot n_1 + n_2\dot n_2)^2\over 1-n_3^2} \ge 0,
$$     
by the Cauchy-Schwartz inequality.  Thus, it is sufficient to consider $\varphi=0$, $n=N$, and 
$$  F_\infty(n)=\int_0^\infty \left[ {\dot n_3^2\over (1-n_3^2)} + \sqrt{\frac32}(1-n_3)^2\right] dt.
$$
Moreover, the curve $\gamma$ traced by $n(t)$ follows a meridian on the sphere.

Following \cite{St91}, we note that
\beqn\label{MM}  F_\infty(n) \ge \int_0^\infty 2\sqrt{g(n)}|\dot n|\, dt
      = \int_\gamma \kappa\sqrt{1-n_3^2}\, ds,  
\eeqn
where $\gamma$ is the curve traced out by $n(t)$, $\kappa=\sqrt[4]{24}$, and the integral is with respect to arclength $ds$ on $\gamma$.  Equality holds when
$|\dot n|=\sqrt{g(n)}$, that is, 
$$  {|\dot n_3|\over 1-n_3^2} = {\kappa\over 2},
$$
which may be integrated to give an explicit formula for the heteroclinic,
\begin{gather}\label{eq:hetero}
n_3(t,\theta,0)= {A(\theta)-e^{-\kappa t}\over A(\theta)+ e^{-\kappa t}}, 
\quad n_1=\sqrt{1-n_3^2}, \\
\text{with} \quad A(\theta)={1+\cos\theta\over 1-\cos\theta}. \nonumber
\end{gather}
Clearly, $n$ is smooth with respect to both $t$ and $\theta\in (0,\pi]$, and a simple calculation shows that ${\partial n\over\partial\theta}(t,0)=0$, and so it is smooth for all $(t,\theta)$.  The exponential decay also follows from direct calculation.  Finally, to evaluate the energy at a minimizer, recall that in \eqref{MM} equality is achieved at a minimizer, and so
$$  F_\infty(n)=\int_{\gamma}\kappa\sqrt{1-n_3^2} \ ds
    =\kappa \int_0^\theta \sin\theta\, d\theta=\kappa(1-\cos\theta).  $$
\end{proof}

\begin{remark}\label{rem1}
It is easy to see that the minimizer $n(t,\theta,\varphi)$ of $d_\infty^-(\theta,\varphi)$ has energy $F_\infty(n)=\kappa(1+\cos\theta)$, and so 
$D_\infty(Q_b(\theta,\varphi))=\kappa(1-|\cos\theta|)$.
\end{remark}

\medskip

We are now ready to prove our upper bound proposition.

\begin{proof}[Proof of Proposition~\ref{p:upII}]
We construct an axially symmetric  map $Q_\xi$ of the form
\begin{equation}\label{Qeta}
Q_\xi(r,\theta,\varphi)
= {}^tR_\varphi \Qb\left(r,\theta\right) R_\varphi,
\end{equation}
where $R_\varphi$ is (as before) the rotation of angle $\varphi$ about the axis $\mathbf e_3$.
As above, in spherical coordinates we decompose the gradient as:
\begin{align}\label{eq:energydensity}
\abs{\nabla Q_\xi}^2 &=
\abs{\frac{\partial\Qb}{\partial r}}^2 + \frac{1}{r^2}\abs{\frac{\partial\Qb}{\partial\theta}}^2+\frac{1}{r^2\sin^2\theta}\Xi[\Qb],\\
\nonumber
\text{with}\quad
\Xi[\Qb]&= \abs{ \partial_\varphi[ {}^t R_\varphi \Qb R_\varphi]}^2.
\end{align}
As the energy will be the same in each vertical cross-section $\{\varphi=\text{constant}\}$ it will be convenient to define a two-dimensional energy,
\begin{multline*} \Et(\Qb;U):= \iint_U  \biggl[ 
   \frac 12\abs{\frac{\partial\Qb}{\partial r}}^2 + \frac{1}{2r^2}\abs{\frac{\partial\Qb}{\partial\theta}}^2+\frac{1}{2r^2\sin^2\theta}\Xi[\Qb] 
   \\ + {1\over\xi^2}f(\Qb)
       +{1\over\eta^2}g(\Qb)\biggr] r^2\sin\theta\, dr\, d\theta,
\end{multline*}
for $U\subset\Omega_0:=\lbrace (r,\theta)\colon r>1,\, 0 < \theta < \pi\rbrace$.
We construct $\Qb$ in the upper half $\Om_0^+:=\{(r,\theta): \ r>1, \ 0\le\theta<{\pi\over 2}\}$ of the cross-section $\{\varphi=0\}$, and define its value in the lower cross-section $\theta\in ({\pi\over 2},\pi]$ by reflection,
$$  \Qb(r,\theta) = T\Qb(r,\pi-\theta)T^t, \quad
\text{where} \ T(x,y,z)=(x,y,-z).  $$
Moreover, we divide the region $\{(r,\theta): \ r>1, \ 0\le\theta<{\pi\over 2}\}$ into three subregions, and define $\Qb$ as a smooth map in each, continuous across the common boundaries.

\bigskip

\begin{figure}[h]
\begin{center}
\begin{tikzpicture}[scale=1.5]

\draw (1,0) arc (0:90:1);
\draw (1,0) -- (5,0);
\draw (0,1) -- (0,4);
\draw (30:1)-- ++(30:4);
\draw (2.5,0) arc (0:30:2.5);
\draw[<->] (.9,0) arc (0:30:.9) node [below left] {$\eta$};
\draw[<->] (1,-.1) -- (2.5,-.1);
\draw (1.7,-.1) node [below] {$2\eta$};
\draw (1.5,2) node {$\Omega_1$};
\draw (3.5,1) node {$\Omega_2$};
\draw (1.7,.5) node {$\Omega_3^+$};

\end{tikzpicture}
\end{center}

\caption{The three subregions of $\Omega_0^+$ used in the proof of Proposition~\ref{p:upII}}
\end{figure}
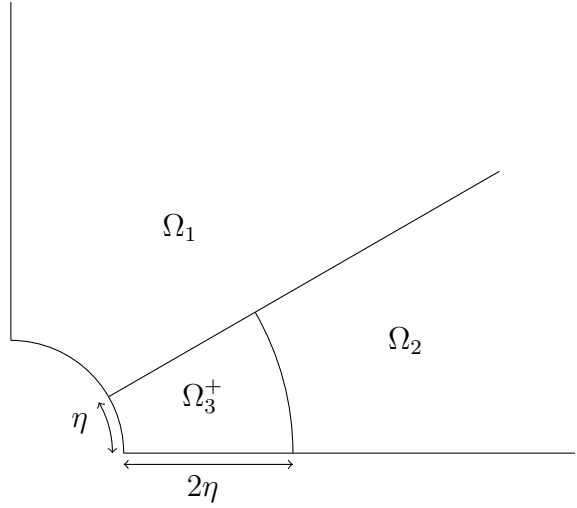

\medskip

\noindent {\sl Region 1:} $\Om_1=\{(r,\theta): \ r>1, 0\le\theta\le {\pi\over 2}-\eta\}$.  In this region, $\Qb$ will be uniaxial,
$\Qb = \nb \otimes \nb-\frac13 I$, for a director field $\nb\in\Stwo$.  Specifically, let \beqn \label{eq:ndef}
n(t,\theta)= \begin{bmatrix} n_1(t,\theta) \\ 0 \\ n_3(t,\theta)\end{bmatrix}
   = \begin{bmatrix} \sqrt{1-n^2_3(t,\theta)} \\ 0 \\ n_3(t,\theta)\end{bmatrix}.
\eeqn 
denote the minimizing geodesic which attains the distance 
$D_\infty(Q_b(\theta,0))$, and whose explicit formula is given in \eqref{eq:hetero}.  Then, for $(r,\theta)\in\Om_1$ and $\rt = (r-1)/\eta$, we set
$$  \nb(r,\theta):= n(\rt,\theta)=n\left({r-1\over\eta},\theta\right), \quad 
\Qb(r,\theta):= \nb \otimes \nb-\frac13 I.  $$

Using the above expression of the energy density \eqref{eq:energydensity} we derive
\begin{align*}  \frac12 |\nabla Q_\xi|^2 &=
 {1\over \eta^2} \left|{\partial n\over\partial\rt}\right|^2
      + {1\over r^2}\left|{\partial n\over\partial\theta}\right|^2
      + {1\over r^2\sin^2\theta} |n_1|^2\\
      &= {1\over 1-n_3^2}\left[
     {1\over\eta^2} \left|{\partial n_3\over\partial\rt}\right|^2 +
      {1\over r^2} \left|{\partial n_3\over\partial\theta}\right|^2
      \right] + {1\over r^2\sin^2\theta} (1-n_3^2),
\end{align*}
as the $\varphi$ derivative term simplifies to $\Xi(\Qb)= 2|\nb_1|^2$.  As $\Qb$ is uniaxial, $f(\Qb)\equiv 0$, and $g(\Qb)=\sqrt{\frac32}(1-\nb_3^2)$. The energy in $\Om_1$ then becomes, after the change of variable $r=1+\eta\rt$,
\begin{align} \nnn
\eta \Et(\Qb;\Om_1) &= \int_0^{\frac{\pi}2-\eta}\int_0^\infty \biggl[ 
 \left|{\partial n\over\partial\rt}\right|^2 +  g(n) \\
 \nnn
    &\qquad
      + {\eta^2\over (1+\eta \rt)}\left|{\partial n\over\partial\theta}\right|^2
      + {\eta^2\over (1+\eta \rt)^2\sin^2\theta} |n_1|^2
\biggr] (1+\eta \rt)^2\, \sin\theta\, d\rt\, d\theta \\
\nnn
&= \int_0^{\frac{\pi}2-\eta} F_\infty(n(\cdot,\theta,0))\, \sin\theta\, d\theta + O(\eta)\\
\label{eq:Om1}
&\le \int_0^{\frac{\pi}2} F_\infty(n(\cdot,\theta,0))\,\sin\theta\, d\theta + O(\eta),
\end{align}
since by the exponential decay estimates of Lemma~\ref{L:geod}, each of the remaining integrals converges, and carries at least one factor of $\eta$.

\medskip

\noindent {\sl Region 2:} $\Om_2=\{(r,\theta): \ r\ge 1+2\eta, {\pi\over 2}-\eta \le\theta\le {\pi\over 2}\}$.  By the exponential decay of $\nb$ to $\mathbf e_3$, the value of $\Qb$ on the ray $r\ge 1+2\eta$, $\theta={\pi\over 2}-\eta$ is already close to $Q_\infty$, so here we interpolate between the two in this sector.  Define $\Phi_\eta^+(\rt)$ to be the spherical angle associated to the heteroclinic 
$n^+(\rt):=n(\rt,{\pi\over 2}-\eta)$, that is,
$$ n^+(\rt)= n(\rt,{\pi\over 2}-\eta) = \left(\sin\Phi_\eta^+(\rt), \ 0, \ \cos\Phi_\eta^+(\rt)\right), \quad \rt\ge 0. $$
We note for later use that the exponential decay of $n$ to $\mathbf e_3$ implies that the angle $\Phi_\eta^+(\rt)$ also has exponential decay to zero as $\rt\to\infty$.

We extend $\Qb$ to $\Om_2$ uniaxially by interpolating this angle:  define
$$  \Phi(\rt,\theta):= \Phi_\eta^+(\rt)\, \chi(\theta),\qquad \text{with}\quad 
  \chi(\theta):={\frac{\pi}2 -\theta\over \eta},\quad
      {\pi\over 2}-\eta\le\theta \le {\pi\over 2}. $$
Then, for $r\ge 1+2\eta$ and ${\pi\over 2}-\eta<\theta \le {\pi\over 2}$ we set
$$ \hat n(t,\theta):= (\sin\Phi(\rt,\theta), 0 , \cos\Phi(\rt,\theta)) \ \text{and} \ 
    \Qb(r,\theta):= \hat n(\rt,\theta)\otimes \hat n(\rt,\theta) - \frac13 I,
$$
where (as usual)  $r=1+\eta \rt$.  

To evaluate the energy in this sector we use
\begin{align*} \frac12 |\nabla Q_\xi|^2 &
= \left|{\partial\Phi\over\partial r}\right|^2
        + {1\over r^2}\left|{\partial\Phi\over\partial \theta}\right|^2
          + {1\over r^2\sin^2\theta}\sin^2\Phi  \\
     &=  \chi^2(\theta)\left|{\partial\Phi_\eta^+\over\partial r}\right|^2
           + {1\over \eta^2 r^2}\left|\Phi_\eta^+\right|^2
             + {1\over r^2\sin^2\theta}\sin^2[\Phi_\eta^+\chi(\theta)] \\
     &\le \left|{\partial\Phi_\eta^+\over\partial r}\right|^2
           + {1\over \eta^2 r^2}\left|\Phi_\eta^+\right|^2
             + {1\over r^2\sin^2\theta}\sin^2\Phi_\eta^+ \\
      &= \left|{\partial n^+\over\partial r}\right|^2
           + {1\over r^2\sin^2\theta}[n^+_1]^2
            + {1\over \eta^2 r^2}\left|\Phi_\eta^+\right|^2.
\end{align*}

We then calculate the energy, recalling that $\Qb$ is uniaxial, and so $f(\Qb)=0$ and 
$$ g(\Qb)=\sqrt{\frac32}\sin^2\Phi\le\sqrt{\frac32}\sin^2\Phi_\eta^+ 
   =g\left(n^+\right).  $$
Changing variables from $r=1+\eta \rt$, since each term in the integrand is bounded by a decaying exponential in $\rt$, we have the estimate:
\begin{align*}
\eta\Et(\Qb;\Om_2)
& \le  \int_{\frac{\pi}2-\eta}^{{\pi\over 2}}\int_2^\infty\biggl[
   \left|{\partial n^+\over\partial \rt}\right|^2
   + g\left(\nb^+(r)\right)  \\
       &\qquad \quad   + {1\over (1+\eta\rt)^2\sin^2\theta}[n_1^+(r)]^2
            + {1\over (1+\eta\rt)^2}\left|\Phi_\eta^+\right|^2 \biggr] (1+\eta \rt)^2\sin\theta\, d\rt\, d\theta \\
                        &\le O(\eta).
\end{align*}

Note that when $\theta=\frac{\pi}2$, $n_3=\mathbf e_3$.  When reflecting to the lower half of the cross-section this will create a discontinuity in the director field, but will be invisible in the tensor $\Qb$, which will take the value $Q_\infty$ continuously across the equatorial plane.

\medskip

\noindent {\sl Region 3:} $\Om_3^+=\{(r,\theta): \ 1<r<1+2\eta, {\pi\over 2}-\eta \le\theta\le {\pi\over 2}\}$.  Unlike the other regions, here our test configuration will not be uniaxial; it is here that we imagine that the Saturn ring defect will occur.

It will be convenient to construct $\Qb$ in the symmetric domain obtained by reflection across the equatorial plane, $\Om_3:=\{(r,\theta): \ 1<r<1+2\eta, {\pi\over 2}-\eta \le\theta\le {\pi\over 2}+\eta\}$.  We note that by the previous steps (and the definition of $\Qb$ by reflection to the lower hemisphere,) the values of $\Qb$ have already been determined on $\partial\Om_3$; in particular, $\Qb|_{\partial\Om_3}$ is uniaxial, with director which carries a degree of $-\frac12$.

Consider the square domain $\Omt_3=\{-1<s<1, \ -1<\tau<1\}$, which is obtained from $\Om_3$ via the change of variables
\beqn\label{eq:st}
 r=1+\eta(s+1),\quad \theta={\pi\over 2}-\eta \tau.  
\eeqn
   Note that here we are considering $(s,\tau)$ as Cartesian coordinates, with Jacobian $dr\, d\theta=\eta^2 ds\, d\tau$.  
We will define $\Qb(r,\theta)=\Qt(s,\tau)$ for $(s,\tau)\in\Omt_3$, with $\Qt$ the solution of an appropriate boundary value problem.  
The energy in $\Om_3$ transforms as,
\begin{align}\label{energy3}
\eta\Et(\Qb;\Om_3) &= \eta\hat E(\Qt;\Omt_3):=\eta \iint_{\Omt_3} \biggl[
   \left|{\partial \Qt\over\partial s}\right|^2
   + {1\over r(s)^2} \left|{\partial \Qt\over\partial \tau}\right|^2  \\ \nnn
     &\qquad + {1\over r(s)^2\sin^2 \theta(\tau)} |\Xi(\Qt)|^2 
      + {1\over \e^2}f(\Qt) + g(\Qt)\biggr] r(s)^2\, \sin\theta(\tau)\, ds\, d\tau,
\end{align}
with $r(s),\theta(t)$ as in \eqref{eq:st}, and $\e:=\xi/\eta\to 0$.
The boundary conditions induced from $\Qb|_{\partial\Om_3}$, given in terms of the director field, are:
\begin{itemize}
\item $n_\eta^+(s+1)=n(s+1,{\pi\over 2}-\eta)$, for $s\in [-1,1]$, $\tau=1$;
\item its reflection, $Tn_\eta^+(s+1)=n(s+1,{\pi\over 2}+\eta)$, for $s\in [-1,1]$, $\tau=-1$;
\item the rescaled homeotropic condition, $(\cos(\tau\eta),0,\sin(\tau\eta))$, for $s=-1$, $\tau\in [-1,1]$;
\item  the interpolated field from Region 2, 
  $(\sin(\tau\Phi_\eta^+(2)), 0, \text{sgn}(\tau)\cos(\tau\Phi_\eta^+(2))$, for $s=1$, $-1\le \tau\le 1$, which is discontinuous but well-defined as a Q-tensor.
\end{itemize}
Moreover each component converges in $C^1$ as $\eta\to 0$, and the boundary conditions determine a degree $-\frac12$ map on $\partial\Omt_3$. 

Introducing polar coordinates $(\rho,\alpha)$ in $\Omt_3$, we parametrize the square $\partial\Omt_3$ with respect to the polar angle, $\rho=\gamma(\alpha)$, $0\le \alpha<2\pi$.  The boundary conditions given above may then be described in terms of this parametrization of $\partial\Omt_3$ via a piecewise smooth phase $\Psi_\eta$,
in the form
$$  \hat n_\eta|_{\partial\Omt_3} 
  = \bigl\{ (\sin\Psi_\eta(\alpha),0,\cos\Psi_\eta(\alpha)): \ 
  \text{on $\rho=\gamma(\alpha)$, $0\le\alpha<2\pi$}\bigr\},
$$
which defines a continuous and piecewise smooth uniaxial tensor $\hat Q_\eta$.
In a similar way we define $\Psi_0$, $\hat n_0$, $\hat Q_0$ corresponding to the $\eta\to 0$ limits of the boundary value components, parametrized by the polar angle $\alpha$.

We first define $\Qt$ in the square annulus $\Omt_3\setminus\overline{\Omt_{3/2}}$, where $\Omt_{3/2}=[-\frac12,\frac12]\times [-\frac12,\frac12]$ and is parametrized in polar coordinates by $\rho=\frac12 \gamma(\alpha)$.  As in $\Om_2$, we extend $\Qt$ as a uniaxial tensor by interpolating the phase angle associated to its director, but here we interpolate along radii,
$$  \hat\Psi_\eta(\rho,\alpha):= 
     {2\rho-\gamma(\alpha)\over \gamma(\alpha)}\Psi_\eta(\alpha)
     + {2\gamma(\alpha)-2\rho\over \gamma(\alpha)}\Psi_0(\alpha),
$$
with $\hat n_\eta(\rho,\alpha):=(\sin\hat\Psi_\eta,0,\cos\hat\Psi_\eta)$ and $\Qt:=\hat n_\eta\otimes \hat n_\eta-\frac13 I$.  Since $\Qt$ is piecewise smooth and $\Psi_\eta\to\Psi_0$ in $C^1$ on each edge of the square, by inserting in \eqref{energy3} we obtain 
$$ \eta\hat E(\Qt;\Omt_3\setminus\Omt_{3/2}) \le O(\eta).  $$

It remains to define $\Qt$ in the smaller square $\Omt_{3/2}$.  Here the boundary data is uniaxial and $\eta$-independent, given by the phase angle $\Psi_0(\alpha)$.  Here we follow \cite{baumanparkphillips12} and introduce a complex order parameter $u=u_1 + i u_2$ to parametrize a $Q$-tensor of the form
\beqn\label{Qu}
  \Qt = \begin{bmatrix}
    \frac 1{\sqrt 6}u_1+\frac13 &  0 &  \frac 1{\sqrt 6} u_2 \\
    0 & -\frac23 & 0 \\
    \frac 1{\sqrt 6} u_2 & 0 & \frac13 - \frac 1{\sqrt 6} u_1
\end{bmatrix}.
\eeqn
The value $\hat n_0$ given on $\partial\Omt_{3/2}$ determines a boundary value for $u$; although $\hat n_0$ jumps (from $\pm \mathbf e_3$) at $(s,\tau)=(\frac12,0)$, the boundary value for $u$ will be well-defined there, and continuous and piecewise smooth on all of $\partial\Omt_{3/2}$, with degree $-1$.  We then minimize the Ginzburg--Landau energy with this given boundary condition to obtain $u:\ \Omt_{3/2}\to\mathbb{C}$ with
$$  \int_{\Omt_{3/2}} \left[\frac 16 |\nabla u|^2 + {1\over 2\e^2}(1-|u|^2)^2\right]
            ds\, d\tau \le \frac\pi 3|\ln \e| + C,  $$
with constant $C$ independent of $\e$. 

Using \eqref{Qu} to define $\Qt$ from $u$, we may thus estimate
$$   \int_{\Omt_{3/2}} \left[ \frac 12 \left|{\partial\Qt\over\partial s}\right|^2  
                            +  \frac 12 \left|{\partial\Qt\over\partial \tau}\right|^2
                            + {1\over \e^2} f(\Qt)\right] ds\, d\tau \le 
                                \frac\pi 3|\ln \e| + C.
$$                                
Comparing with \eqref{energy3}, we note that $r(s),\sin\theta(t)\to 1$ uniformly on $\Omt_3$, and hence we may conclude that 
$$  \eta \hat E(\Qt; \Omt_{3/2}) \le \frac\pi 3 \eta|\ln \e| + O(\eta).  $$

In conclusion, the only nontrivial contribution to the energy at order $\frac{1}{\eta}$ comes from Region 1, and, extending the definition of $\Qb$ by reflection to the entire cross-section $\Omega_0=\lbrace r>1,\, 0<\theta<\pi\rbrace$, we obtain the desired upper bound,
$$   \widetilde E(\overline Q_\xi;\Om_0)\le
\frac 1\eta  
 \int_0^{\pi} F_\infty(n(\cdot,\theta,0))\, \sin\theta\, d\theta + \frac \pi 3 |\ln\e| + O(1).
$$
Defining $Q_\eta$ via \eqref{Qeta}, we complete the proof of the proposition.
\end{proof}

\section{Proving Theorem~\ref{t:asympt} and Proposition \ref{p:enercomp}}\
\label{s:proof}

\begin{proof}[Proof of Theorem~\ref{t:asympt}]
For any measurable $U\subset\mathbb S^2$ we have by Proposition~\ref{p:low}
\begin{equation}\label{eq:low}
E_\xi(Q_\xi;\Omega\cap\mathcal C(U))\geq \frac{1}{\eta}\int_U D_\lambda(Q_b(\omega))\,d\mathcal H^2(\omega)+o\left(\frac 1\eta\right).
\end{equation}
On the other hand, using \eqref{eq:low} and the upper bound proved in \S~\ref{s:up} we obtain
\begin{align*}
E_\xi(Q_\xi;\Omega\cap\mathcal C(U))&
=E_\xi(Q_\xi;\Omega)-E_\xi(Q_\xi;\Omega\cap\mathcal C(\mathbb S^2\setminus  U))\\
&\leq 
\int_{\mathbb S^2} D_\lambda(Q_b(\omega))\,d\mathcal H^2(\omega)
- \int_{\mathbb S^2\setminus  U} D_\lambda(Q_b(\omega))\,d\mathcal H^2(\omega) + o\left(\frac 1\eta\right) \\
& = \int_{U} D_\lambda(Q_b(\omega))\,d\mathcal H^2(\omega) + o\left(\frac 1\eta\right).
\end{align*}
\end{proof}

\medskip

\begin{proof}[Proof of Proposition \ref{p:enercomp}]
Let $Q= n\otimes n -\frac13 I$ with $n\in\mathcal{N}$, for which $E_\xi(Q)<\infty$.  Then
$$  \eta E_\xi(Q) = \int_\Om\left[\eta |\nabla n|^2 + {1\over \eta} g(n)\right] dx<\infty.
$$
In particular, by Fubini's theorem, for almost every $\omega\in\Stwo$ and $\eta,\xi$ fixed, we have
$$  \int_1^\infty \left[\eta \left|{\partial n\over\partial r}\right|^2 + {1\over \eta} g(n)\right] r^2 \, dr<\infty,
$$
and hence on almost every ray, $n(r,\omega)\to \pm \mathbf e_3$ as $r\to\infty$.  Again by Fubini's theorem, $n(r,\cdot)\in H^1(\Stwo;\Stwo)$ for almost every $r>1$, and so either $n(r,\omega)\to \mathbf e_3$ for almost all $\omega\in\Stwo$ or $n(r,\omega)\to -\mathbf e_3$ for almost all $\omega\in\Stwo$.  Without loss, we assume the former, $n(r,\omega)\to \mathbf e_3$ a.e.  In particular, after the familiar change of variables $r=1+\eta t$, $\hat n(t,\omega):=n(r,\omega)$ is an admissible function for the minimization problem $d_\infty^+$ for a.e. $\omega$, and so,
\begin{align*}
\eta E_\xi(Q) &= \int_{\Stwo}\int_0^\infty \left[ 
   \left|{\partial \hat n\over\partial r}\right|^2 + \eta |\nabla_\omega \hat n|^2 +  g(n)\right] (1+\eta t)^2\, dt\, d\mathcal{H}^2(\omega) \\
   &\ge \int_{\Stwo} F_\infty(\hat n(\cdot,\omega)) 
       \, d\mathcal{H}^2(\omega) \\
 &\ge   \int_{\Stwo} d_\infty^+(\omega) \, d\mathcal{H}^2(\omega) \\
 &=  \int_0^{2\pi}\int_0^\pi \kappa (1-\cos\theta)\, \sin\theta d\theta\, d\varphi = 8\pi\kappa,
\end{align*}
by Lemma~\ref{L:geod}.  

 On the other hand, we note that $D_\lambda\le D_\infty$ for any $\lambda\in (0,\infty]$, 
 since the domain of $F_\lambda$ contains the domain of $F_\infty$, and on the latter both functionals coincide. Thus, for any $\lambda\in (0,\infty]$, by \eqref{liminfty},
$$  \lim_{\xi\to 0\atop {\eta\over\xi}\to\lambda} 
      \left(\min_{\mathcal{H}}\eta E_\xi\right)
   \le \lim_{\xi\to 0\atop {\eta\over\xi}\to\infty} 
       \left(\min_{\mathcal{H}}\eta E_\xi\right)
   =2\pi\kappa, $$
and the proposition follows.

\end{proof}

\bibliographystyle{acm}

\end{document}